\newtheorem{theorem}{Theorem}[section]
\newtheorem{lemma}[theorem]{Lemma}
\newtheorem{corollary}[theorem]{Corollary}
\newtheorem{proposition}[theorem]{Proposition}
\newtheorem{property}{Property}[section]
\newtheorem{remark}[theorem]{Remark}
\newtheorem{definition}[theorem]{Definition}
\newcommand{\citeDOV}{{\sf DOV}}
\def\N{\mathbb{N}}
\def\Q{\mathbb{Q}}
\def\P{\mathbb{P}}
\def\Z{\mathbb{Z}}
\def\R{\mathbb{R}}
\def\S{\mathcal{S}}
\def\C{\mathcal{C}}
\def\B{\mathcal{B}}
\def\A{\mathcal{A}}
\def\F{\mathcal{F}}
\def\H{\mathcal{H}}
\def\J{\mathcal{J}}
\def \W{\mathrm{W}}
\def\cR{\mathcal{R}}
\def\cL{\mathcal{L}}
\def\E{\mathbb{E}}
\def \e{\varepsilon}
\def \epsilon{\varepsilon}
\def\l{\ell}
\DeclareMathOperator*{\argmax}{arg\,max}
\newcommand{\sourav}[1]{{\textcolor{black}{#1}}}
\begin{document}
\begin{frontmatter}
\title{Brownian absolute continuity of the KPZ fixed point with arbitrary initial condition}
\runtitle{Brownian absolute continuity of the KPZ fixed point}
\begin{aug}
\author[A]{\fnms{Sourav} \snm{Sarkar}\ead[label=e1]{ssarkar@math.toronto.edu}}
\and
\author[B]{\fnms{B\'alint} \snm{Vir\'ag}\ead[label=e2]{balint@math.toronto.edu}}
\address[A]{Department of Mathematics,
University of Toronto,
Toronto, ON, Canada.
\printead{e1}}

\address[B]{Departments of Mathematics and Statistics,
University of Toronto,
Toronto, ON, Canada.
\printead{e2}}
\end{aug}

\begin{abstract}We show that the law of the KPZ fixed point starting from arbitrary initial condition is absolutely continuous with respect to the law of Brownian motion $B$ on every compact interval. In particular, the Airy$_1$ process is absolutely continuous with respect to $B$ on any compact interval.
\end{abstract}

\begin{keyword}[class=MSC2010]
\kwd{82B23}
\kwd{82C22}
\kwd{60H15}
\end{keyword}

\end{frontmatter}



\section{Introduction} In 1986, Kardar, Parisi and Zhang \cite{KPZ86} predicted universal scaling behavior for many planar random growth processes. A central object to describe the random growth models in the KPZ universality class is the Airy line ensemble. The parabolic Airy line ensemble is a random sequence of functions $\A_1> \A_2 > \ldots$. It was introduced by Pr\"ahofer and Spohn \cite{prahofer2002scale} in the version $\A_i(t) + t^2$, which is stationary in time, see also Corwin and Hammond \cite{CH}. The functions $A_i(t)$
are locally Brownian with
diffusion parameter $2$. The top line $\A_1$ is known as the parabolic Airy$_2$ process that appear as the limiting spatial fluctuation of random growth models starting from a single point. The Airy sheet and directed landscape were constructed by Dauvergne, Ortmann and Vir\'ag \cite{DOV} (\citeDOV\ in the sequel) as the scaling limits of one of the KPZ-class models, Brownian last passage percolation. The standard Airy sheet $\S:\R^2\mapsto \R$ is a random continuous function defined in terms of the Airy line ensemble such that $\S(0,\cdot)=\A_1(\cdot)$. The Airy sheet of scale $s$ is defined by
\[\S_s(x,y)=s\S(x/s^2,y/s^2)\,,\]
for any $s>0$.
 The construction of the directed landscape from the Airy sheet is analogous to that of the Brownian motion from Gaussian distribution. Let $\R^4_\uparrow=\{(x,s;y,t)\in \R^4: s<t\}$. The coordinates $x$ and $y$ can be thought of as spatial and the coordinates $s$ and $t$ as temporal. Then the directed landscape $\mathcal L: \R^4_\uparrow\mapsto\R$ is a random continuous function satisfying
the metric composition law
\[\cL(x,r;y,t)=\sup_{z\in \R}(\cL(x,r;z,s)+\cL(z,s;y,t))\,,\]
for all $(x,r,y,t)\in \R^4_\uparrow$ and all $s\in (r,t)$; and with the property that $\cL(\cdot, t;\cdot, t + s^3)$ are independent Airy sheets of scale $s$ for any set of
disjoint time intervals $(t,t+s^3)$.

On the other hand, all models in the KPZ universality class have an analogue of the
height function which is conjectured to converge at large time and small length scales under the KPZ $1:2:3$ scaling to a universal object $h_t(\cdot)$ called the KPZ fixed point. Matetski-Quastel-Remenik \cite{matetski2016kpz} construct the KPZ fixed point as a Markov process in $t$, and they show that it is a limit of the
height function evolution of the totally asymmetric simple exclusion process with arbitrary initial condition. \sourav{Later in \cite{NQR20} Nica-Quastel-Remenik proved the convergence of the Brownian last passage percolation to the KPZ fixed point. As the directed landscape was constructed earlier from Brownian last passage percolation in \citeDOV, this showed that the KPZ fixed point also admits the variational formula in terms of the directed landscape.} That is, the KPZ fixed point
can also be written in terms of the directed landscape and its initial condition $h_0:\mathbb R \to \mathbb R\cup \{-\infty\}$ as
\[h_t(y) = \sup_{x\in \R} (h_0(x) + \cL(x, 0; y, t))\,,\]
for all $y\in \R$. For the narrow wedge initial condition, $h_0(0)=0$ and $h_0(x)=-\infty$ elsewhere, $h_1(\cdot)=\A_1(\cdot)$ is the parabolic Airy$_2$ process. For $h_0\equiv 0$, the flat initial condition, $h_1(\cdot)$ is called the Airy$_1$ process. In this paper we show that the law of $h_t(\cdot)$ is absolutely continuous with respect to Brownian motion on any compact interval, for all initial conditions for which the solutions are finite everywhere at a given positive time.

Let $\C[a,b]$ denote the space of all continuous functions $f:[a,b]\mapsto \R$ and $\C_0[a,b]$ denote the space of all continuous functions $f:[a,b]\mapsto \R$ with $f(a)=0$, with the topologies of uniform convergence. We shall denote the space of all continuous functions in $\R$ as $\C$ and $\C^\N$ denotes the space of all continuous functions from $\R\times \N\mapsto \R$ with the topology of uniform convergence on compact sets.

\begin{definition}\label{d:fspace} A function $f:\R\to \R\cup \{-\infty\}$ will be called a $t$-{\bf finitary initial condition} if $f(x)\neq -\infty$ for some $x$, $f$ is bounded from above on any compact interval and
\[\frac{f(x)-x^2/t}{|x|}\to -\infty\]
as $|x|\to \infty$.
  A finitary initial condition (for some, or equivalently, all $t$) will be said to be {\bf compactly defined} if $f(x)=-\infty$ outside a compact set.
\end{definition}
The name comes from the fact that $h_t(x)$ is finite for all $x\in \mathbb R$ if and only if the initial condition is $t$-finitary. A strong version of this is shown in Proposition \ref{l:conditionh0}.

For example, a function $f\not \equiv -\infty$, satisfying $f(x)\leq C+x^2/t-|x|\log(1+|x|)$ for some $C>0$, is a $t$-finitary initial condition.  The upper semicontinuous initial conditions for which the KPZ fixed point was established in \cite{matetski2016kpz} are $t$-finitary for all $t>0$.

Fix a diffusion parameter $\sigma^2$; in this paper $\sigma^2=2$. We call a random function in $F$ in $\C$ {\bf Brownian on compacts} if for all $y_1<y_2$, the law of the $\C_0[y_1,y_2]$-random variable
$$y\mapsto F(y)-F(y_1)\,,$$
is absolutely continuous with respect to the law of a Brownian motion starting from $(y_1,0)$ with diffusion parameter $\sigma^2$ on $[y_1,y_2]$.
The following theorem is the main result of this paper.
\begin{theorem}\label{t:gen} Let $t>0$; then for any $t$-finitary initial condition $h_0$ the random function
\[h_t(y) = \sup_{x\in \R} (h_0(x) + \cL(x, 0; y, t))\,,\]
is Brownian on compacts.
\end{theorem}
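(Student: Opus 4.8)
The plan is to reduce, by scaling and localization, to a variational problem against a single Airy sheet, to realize that sheet through the parabolic Airy line ensemble, and then to exploit its Brownian Gibbs property. Being ``Brownian on compacts'' is a statement about the law of $y\mapsto h_t(y)-h_t(y_1)$ on an arbitrary compact interval $[y_1,y_2]$, so fix such an interval; by the $1:2:3$ scaling invariance of $\cL$ we may take $t=1$. Using that $h_0$ is then $1$-finitary together with the quadratic comparison bounds for $\cL(x,0;y,1)$ from \citeDOV, for every $\eta>0$ there is $M<\infty$ such that, with probability at least $1-\eta$, every maximizer of $x\mapsto h_0(x)+\cL(x,0;y,1)$ lies in $[-M,M]$ simultaneously for all $y\in[y_1,y_2]$; on that event $h_1|_{[y_1,y_2]}$ agrees with the KPZ fixed point started from the compactly defined initial condition $h_0^{(M)}$, equal to $h_0$ on $[-M,M]$ and $-\infty$ elsewhere. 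Since a random function that agrees, on an event of probability at least $1-\eta$, with one whose law is absolutely continuous with respect to Brownian motion has a law whose singular part has total mass at most $\eta$, letting $\eta\downarrow 0$ reduces Theorem~\ref{t:gen} to the case of a compactly defined $h_0$. (One may additionally assume $h_0$ continuous, by first running the fixed point for a short time and writing $h_1(y)=\sup_z(h_{1-\varepsilon}(z)+\cL(z,1-\varepsilon;y,1))$ with $h_{1-\varepsilon}$ finite and continuous by Proposition~\ref{l:conditionh0} and $\cL(\cdot,1-\varepsilon;\cdot,1)$ an independent Airy sheet; this will not be essential.)

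Next I would realize $\S=\cL(\cdot,0;\cdot,1)$ as a deterministic functional of the parabolic Airy line ensemble $\A=(\A_1,\A_2,\dots)$ as in \citeDOV. For compactly defined $h_0$ the value $h_1(y)=\sup_x\bigl(h_0(x)+\S(x,y)\bigr)$ is then a last-passage value across $\A$ whose source is weighted by $h_0$, and splitting an optimal path at the last time it visits the top curve yields a decomposition roughly of the form
\[
h_1(y)=\A_1(y)+\sup_{p\le y}\bigl(\Phi(p)-\A_1(p)\bigr),
\]
where $\Phi$ depends only on $h_0$ and on $(\A_2,\A_3,\dots)$ and the last term is nondecreasing in $y$. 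Thus, on any subinterval of $[y_1,y_2]$ on which this nondecreasing term is constant, $h_1$ is a vertical translate of $\A_1$, while on the ``contact set'' $\{y:\Phi(y)-\A_1(y)=\sup_{p\le y}(\Phi(p)-\A_1(p))\}$ one has $h_1(y)=\Phi(y)$. The ingredients for the translate-of-$\A_1$ pieces are: the absolute continuity of the parabolic Airy$_2$ process $\A_1$ with respect to Brownian motion on compacts (Corwin--Hammond \cite{CH}), made quantitative through $L^p$ bounds on the Radon--Nikodym derivative (Hammond's Brownian regularity estimates), and the fact that, conditionally on $(\A_2,\A_3,\dots)$, the Brownian Gibbs property presents $\A_1$ as a Brownian motion with parabolic drift conditioned to stay above $\A_2$, so these properties survive the conditioning.

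The main obstacle is to handle the set on which the nondecreasing correction genuinely increases inside $[y_1,y_2]$: there $h_1$ is a patchwork of translates of $\A_1$ and of pieces of $\Phi$, glued along a random closed set (the boundary of the contact set) which is not a stopping-time structure for either $\A_1$ or $\Phi$ alone, and one must show that such a patchwork still has a law absolutely continuous with respect to Brownian motion. The plan is to treat $\Phi$, which is itself a KPZ-fixed-point--type object built from the shifted ensemble $(\A_2,\A_3,\dots)$, recursively --- establishing its Brownian absolute continuity by the same argument, the recursion being well founded because for a compactly defined finitary initial condition the contribution of the deep lines to the variational formula is ultimately negligible --- and then to run a gluing argument: conditioning on $(\A_2,\A_3,\dots)$ and on the induced random partition of $[y_1,y_2]$, each piece is absolutely continuous with respect to a Brownian motion started from the appropriate point, and the strong Markov property of Brownian motion allows one to multiply the corresponding Radon--Nikodym densities, the $L^p$ control ensuring that the product is integrable. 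I expect this step --- setting up the recursion for $\Phi$, controlling the geometry of the contact set, and making the gluing quantitative --- to be where essentially all of the work lies; the reductions of the first paragraph are routine given Proposition~\ref{l:conditionh0} and the metric estimates of \citeDOV.
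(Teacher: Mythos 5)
Your first paragraph (localize to compact $x$ via Proposition~\ref{l:conditionh0}, reduce to continuous $h_0$ by running the fixed point for a short time, scale to $t=1$) matches the paper's reduction up to cosmetic differences. Your idea to realize $\S$ through $\A$ and to invoke the Brownian Gibbs property is also the right starting point. The divergence — and the gap — is in how you then prove absolute continuity.

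Your decomposition $h_1(y)=\A_1(y)+\sup_{p\le y}(\Phi(p)-\A_1(p))$ is a Pitman/Skorohod reflection of $\A_1$ against a secondary profile $\Phi$, and Pitman transforms are indeed the paper's central tool (Lemma~\ref{l:case2}, Corollary~\ref{l:corcase2}). But the gluing plan you propose for the contact set is where the argument breaks. The set $\{y:\Phi(y)-\A_1(y)=\sup_{p\le y}(\Phi(p)-\A_1(p))\}$ is a random closed set adapted to neither $\A_1$ nor $\Phi$ alone; ``conditioning on the induced partition and multiplying Radon--Nikodym densities over pieces'' presupposes a regenerative/strong-Markov structure at the boundary points of the contact set that you have not established and that is not available at the level of generality you are working in. Moreover your plan leans on $L^p$ bounds on Radon--Nikodym derivatives (Hammond-type estimates) to make the product integrable; the paper explicitly does \emph{not} have such bounds (it obtains plain absolute continuity and states that upgrading to $L^p$ would need new ideas), so that is both an unjustified import and, for the scope of Theorem~\ref{t:gen}, unnecessary. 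Finally, the recursion over $(\A_2,\A_3,\dots)$ is left heuristic (``deep lines ultimately negligible''); to make this precise one needs a concrete a.s.\ finite cutoff.

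The paper resolves all three of these issues in one stroke by choosing a different cut of the variational problem: instead of splitting at the last visit to the top curve, split at spatial location $0$ in the line ensemble. Lemma~\ref{l:decomp} gives a.s., for $x\in[1,x_0]$, $y\in[1,y_0]$,
\[
\S(x,y)=\max_{\l\le L_0}\bigl(\A[x\to(0,\l)]+\A[(0,\l)\to(y,1)]\bigr)
\]
with a random but a.s.\ finite number of lines $L_0$ (coming from an infinite geodesic through $\A$), so the infinite ensemble is genuinely reduced to finitely many lines with no heuristic tail estimate needed. Absorbing $h_0$, one gets $h(y)=\max_{\l\le L_0}(G_\l+\A[(0,\l)\to(y,1)])$ with starting weights $G_\l$ that are measurable with respect to $\F_-=\sigma\{\A_i(x):x\le0\}$ (Lemma~\ref{insigma}). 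This measurability is the crucial structural point you are missing: after conditioning on $\F_k\supseteq\F_-$, the Brownian Gibbs property makes the top $k$ lines on $[0,y_0]$ independent Brownian motions (in an absolutely continuous sense) with the $G_\l$ frozen. The problem then becomes a clean finite one (Theorem~\ref{l:condabscont}): show that $\max_{1\le\l\le k}(g_\l+\B[(0,\l)\to(y,1)])$ is Brownian on compacts for $k$ independent Brownian motions with arbitrary deterministic starting points. The paper proves this by induction on $k$, collapsing the bottom $k-1$ lines to a single line via the metric composition law and reducing to the Pitman transform of two lines. Corollary~\ref{l:corcase2} then handles the two-line case \emph{without any gluing}: one pushes forward to a small positive time to get a jointly Lebesgue-absolutely-continuous state for the Markov pair $(\mathrm WB_1,B_2)$ and then lets the Markov property carry the absolute continuity forward. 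This is exactly the clean substitute for the stitching argument you anticipated being the hard part; I would encourage you to look at how Lemma~\ref{l:case2}/Corollary~\ref{l:corcase2} turn the Pitman pair into a genuine Markov process, which eliminates the contact-set analysis entirely.
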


\begin{remark}
In particular, the Airy$_1$ process is Brownian on compacts. Hence for any compact set $K$, the process has a unique point of maximum on $K$ a.s.\
\end{remark}

\subsection{A partial review of some recent works} The Brownian nature of the KPZ fixed point in general, and the Airy$_2$ process in particular, has been a subject of intense research in recent times. We provide a brief review here; the interested reader is referred to Calvert, Hammond and Hegde \cite{calvert2019brownian} and the references therein for a more elaborate account.

The locally Brownian nature of the Airy$_2$ process has been previously shown in different formulations. One relatively weak version is to consider local limits of the Airy$_2$ process; that is, to show that $\e^{-1/2}(\A(x+\e)-\A(x))$ for a given $x\in\R$ converges in law to a Brownian motion as $\e\to 0$. This was shown in H\"{a}gg \cite{H08}, Cator and Pimentel \cite{CP15}, Quastel and Remenik \cite{QR13}. In fact, \cite{QR13} also establishes H\"older $1/2-$ continuity of the Airy$_2$ and Airy$_1$ processes.  More recently, the H\"older $1/2-$ continuity and the locally Brownian nature (in terms of convergence of the finite dimensional distributions of $\e^{-1/2}(\A(x+\e)-\A(x))$) were established in \cite[Theorem 4.13 and Theorem 4.14]{matetski2016kpz}. Such H\"older continuity results and local limits in the space of continuous functions for certain initial conditions have also been established in Pimentel \cite{Pim18} and {more recently in Pimentel \cite{Pim19}}.

A stronger notion of the
locally Brownian nature is absolute continuity with respect to Brownian motion on compact intervals, {which we call Brownian on compacts. This was shown in Quastel and Remenik \cite{QR11} for the Hopf-Cole solution at time $t>0$ of the
KPZ equation, a random growth model in the KPZ universality class, starting from special initial conditions which are sums of Brownian motions and Lipschitz functions.} That the Airy$_2$ process is Brownian on compacts was first proved in \cite{CH} using the \textit{Brownian Gibbs property}. The result has been recently extended in \cite{calvert2019brownian} (see also Hammond \cite{Ham19a}) to show that the Radon Nikodym derivative is in $L^p$ space for all $p\in (1,\infty)$. However, for general initial conditions, no such absolute continuity result was known. The best known result in this direction was a form of Brownian regularity, called \textit{patchwork quilt of Brownian fabrics}, established in Hammond \cite{Ham19d} and \cite{calvert2019brownian}. Roughly the result states that the KPZ fixed point $h(\cdot)$ on a unit interval is divided into a random number of subintervals, the patches, with random boundary points in such a way that the restriction of the profile to each patch is absolutely continuous with respect to a Brownian motion with Radon-Nikodym derivative in $L^p$ for all $p<3$. Conjecture $1.3$ in \cite{Ham19d} asks for establishing this with a single patch and Radon-Nikodym derivative is in $L^p$ for all $p>0$, a problem which remains open.


Our result in Theorem \ref{t:gen} of this paper corresponds to establishing absolute continuity with a \textit{single} patch.
As stated in pages 9-10 in \cite{Ham19d}, the reduction from a random number of patches to a single patch and removal of all the stitch points require novel ideas. Indeed, except for the use of the Brownian Gibbs property, our proof of Theorem \ref{t:gen} in this paper proceeds along very different lines than that in \cite{Ham19d} and \cite{calvert2019brownian}.

\sourav{We believe that the arguments in this paper can be upgraded to show that the Radon-Nikodym derivative of the KPZ fixed point in a compact interval with respect to a Brownian motion is in $L^p$ for all $p>0$. However, some new ideas and estimates will be required to get this strong control on the Radon-Nikodym derivative.}

\subsection{Organization of the paper} In Section \ref{s:prelim} we introduce the background material and the relevant results that will be used in the paper. Section \ref{s:geo} is devoted to the construction and properties of infinite geodesics in the Airy line ensemble. In Section \ref{absbrow} we prove that Brownian last passage percolation is absolutely continuous with respect to Brownian motion on compact intervals. Finally in Section \ref{s:proof} we prove Theorem \ref{t:gen}. We prove Proposition \ref{l:conditionh0} in Section \ref{s:last}.

\subsection{A few words about the proofs} Our proofs are probabilistic in nature and rest on the geometry of the underlying Airy sheet and the directed landscape. In \citeDOV, the Airy sheet was constructed as a last passage percolation in the Airy line ensemble. Based on that construction and armed with the Brownian Gibbs property, the law of the KPZ fixed point \eqref{e:defh2} can be described in terms of last passage percolation across the Airy line ensemble. We can compare this to Brownian last passage percolation. That is, Theorem \ref{t:gen} boils down to showing that for each $k\in \N$, and $k$ independent Brownian motions starting from $k$ arbitrary points, the top line of an extension of the Brownian \textit{melon} is absolutely continuous with respect to Brownian motion. This is proved by showing it for $k=2$ and using the metric composition law of last passage percolation and a simple induction argument.

\section{Preliminaries}\label{s:prelim} In this section we recall the relevant results and background material from \citeDOV\ that we will need in this paper. For a possibly finite sequence of continuous functions $f=(f_1,f_2,\ldots)$ with domain $\R$ and coordinates $x\leq y$ and $m\leq \l$, we define a \textbf{path} from $(x,\l)$ to $(y,m)$ as a nonincreasing function $\pi:[x,y]\mapsto \N$ which is cadlag
on $(x,y)$ and satisfies $\pi(x)=\l$ and $\pi(y)=m$. The paths are nonincreasing in order to fit in with the natural indexing of the Airy line ensemble. We define the \textbf{length} of $\pi$ as the sum of increments of $f$ along the path $\pi$. That is, for each $m\le i<\l$, if $t_i$ denotes the point where $\pi$ jumps from $f_{i+1}$ to $f_i$, then the length of $\pi$ is given by
\[\l(\pi)=f_m(y)-f_m(t_m)+\sum_{i=m+1}^{\l-1} (f_i(t_{i-1})-f_i(t_i))+f_\l(t_{\l-1})-f_\l(x)\,.\]
For $x\leq y\in \R$ and $m<\l\in \N$, we define the \textbf{last passage value} of $f$ from $(x,\l)$ to $(y,m)$ by
\[f[(x,\l)\to (y,m)]:=\sup_\pi \l(\pi)\,,\]
where the supremum is taken over all paths $\pi$ from $(x,\l)$ to $(y,m)$. Any path $\pi$ for which $\l(\pi)$ is the last passage value between its endpoints is called a \textbf{last passage path} or a \textbf{geodesic}.

We say that a point $(x,t)$ lies \textbf{along} a path $\pi:[s,r]\mapsto \N$ if $t\in [s,r]$ and
\[\lim_{r\to t-}\pi(r)\geq x\geq \lim_{r\to t+}\pi(r)\,.\]
In other words, if the graph of $\pi$ is connected at its jumps by vertical lines, then $(x,t)$ will lie on this connected version of the graph.

We also define the gap process $g=g(f)$ by $g_i=f_i-f_{i+1}$. Then for any path $\pi$ from $(x,\l)$ to $(y,m)$, its length can also be written in terms of the gap process, that is,
\begin{equation}\label{e:gap}
\l(\pi)=f_m(y)-f_\l(x)-\sum_{i=m}^{\l-1}g_i(t_i)\,,
\end{equation}
where, as before, the times $t_i$ correspond to the jumps of the path $\pi$ from $f_{i+1}$ to $f_i$. Thus the last passage value can also be interpreted as a difference of endpoints minus a minimal sum of gaps. Observe that for any function $h:\R\mapsto \R$, if $f'_i=f_i+h$ for all $i=1,2,\ldots$, then the gap process and the minimal sum of gaps remain unchanged. Hence a geodesic in $f$ is also a geodesic in $f'$.

The following \textbf{metric composition law} is enjoyed by last passage  and is quoted here from \citeDOV, Lemma $3.2$.
\begin{property}[Metric composition law]\label{p:metrcomp}Let $x\leq y\in \R$ and $m<\l\in \N$. Then for any $k\in \{m,m+1,\ldots, \l\}$, we have
\begin{eqnarray*}
f[(x,\l)\to (y,m)]&=& \sup_{z\in [x,y]}(f[(x,\l)\to (z,k)]+f[(z,k)\to (y,m)])\\
&=& \sup_{z\in [x,y]}(f[(x,\l)\to (z,k)]+f[(z,k-1)\to (y,m)])\,.
\end{eqnarray*}
Similarly, for any $z\in [x,y]$,
\[f[(x,\l)\to (y,m)]= \sup_{k\in  \{m,m+1,\ldots, \l\}}(f[(x,\l)\to (z,k)]+f[(z,k)\to (y,m)])\,.\]
This implies a (reverse) triangle inequality for last passage value. For any $x\leq z\leq y$ and $k\in \{m,m+1,\ldots, \l\}$,
\begin{equation}\label{e:triangleineq}
f[(x,\l)\to (y,m)]\geq f[(x,\l)\to (z,k)]+f[(z,k)\to (y,m)]\,.
\end{equation}
\end{property}
For $x\leq y\in \R$ and $m<\l\in \N$, there exist last passage paths $\pi^-,\pi^+$  between $(x,\l)$ and $(y,m)$ such that for any last passage path $\pi$ between $(x,\l)$ and $(y,m)$ and any $z\in [x,y]$, we have
\[\pi^-(z)\leq \pi(z)\leq \pi^+(z)\,.\]
We call $\pi^-$ the \textbf{leftmost last passage path} and $\pi^+$ the \textbf{rightmost last passage path} (see \citeDOV, Lemma $3.5$). The following property is derived from \citeDOV, Proposition $3.7$.
\begin{property}[Ordering of geodesics]\label{p:porder}Let $x_1\leq x_2\leq y_1\leq y_2\in \R$ and $m<\l\in \N$. Let $\pi_i^+$ denote the rightmost last passage path from $(x_i,\l)$ to $(y_i,m)$ for $i=1,2$. Then for all $s\in [x_2,y_1]$,
\[\pi_1(s)\leq \pi_2(s)\,.\]
\end{property}

\subsection{Pitman transform}Let $\C^2_+$ be the space of continuous functions $f=(f_1,f_2)$ where $f_i:[0,\infty)\mapsto \R$ for $i=1,2$. For $f\in \C^2_+$, we define $\mathrm Wf=(\W f_1,\W f_2)\in \C^2_+$, the Pitman transform of $f$ as follows. For $x<y\in [0,\infty)$, define the maximal gap size
\[{G(f_1,f_2)(x,y):=\max\{\max_{s\in [x,y]}(f_2(s)-f_1(s)),0\}\,.}\]
Then define
\begin{equation}\label{e:pitmantrans}
\W f_1(t)=f_1(t)+G(f_1,f_2)(0,t)\,,
\end{equation}
\[\W f_2(t)=f_2(t)-G(f_1,f_2)(0,t)\,,\]
for all $t\in [0,\infty)$. Informally, the graph of $\W f_1$ represents the reflection of the graph of $f_1$ off $f_2$. This is also called the \emph{Skorohod reflection}. Note that this definition is slightly different from that in \citeDOV, Section $4$, in order to accommodate functions not starting from the origin.

Since $G(f_1,f_2)(0,t)\geq 0$, hence
\[\W f_1(t)\geq f_1(t)\,,\qquad \mbox{ and }\qquad \W f_2(t)\leq f_2(t)\,,\]
and since $G(f_1,f_2)(0,t)\geq f_2(t)-f_1(t)$, hence
\[\W f_1(t)\geq f_2(t)\,,\qquad \mbox{ and }\qquad \W f_2(t)\leq f_1(t)\,,\]
for all $t\in [0,\infty)$.

We can write $\W f_1$ in terms of the last passage values. To see this, first note that
\[\W f_1(t)=f_1(t)+\max\{\max_{s\in [0,t]}(f_2(s)-f_1(s)),0\}=\max\{f_1(t),\max_{s\in [0,t]}(f_2(s)+f_1(t)-f_1(s))\}\,.\]
Since $f_1(t)=f_1(0)+f[(0,1)\to(t,1)]$ and $\max_{s\in [0,t]}(f_2(s)+f_1(t)-f_1(s))\}=f_2(0)+f[(0,2)\to(t,1)]$, hence
\begin{equation}\label{e:ptr}
\W f_1(t)=\max_{i=1,2}\{f_i(0)+f[(0,i)\to(t,1)]\}\,,
\end{equation}
for all $t\in [0,\infty)$.

For $f_1(0)=f_2(0)=0$, $\W f$ is also called the \textbf{melon} of $f$. In this case, clearly for all $t\in [0,\infty)$,
\[\W f_1(t)=f[(0,2)\to(t,1)]\,.\]
A special case of this for Brownian motions is particularly interesting. Let $B=(B_1,B_2)$ be two independent standard Brownian motions (started from the origin). Let $\hat{B}=(\hat{B}_1,\hat{B}_2)$ be the law of $B$ conditioned (in the sense of Doob) never to collide. That is, let $h(x)=(x_1-x_2)$ for $x=(x_1,x_2)\in \R^2$. Then $h(x)$ is a positive harmonic function on the set $\{x\in \R^2: x_1>x_2\}$ and $\hat{B}$ is the Doob $h$-transform of $B$. Then $\W B$ has the same law as $\hat{B}$. This is essentially equivalent to Pitmans' $2M-B$ Theorem in Pitman \cite{Pi75}. A generalization of this result for $n$ Brownian motions was proven in O'Connell and Yor \cite{o2002representation}.
\subsection{Brownian Gibbs property} The following property enjoyed by the parabolic Airy line ensemble $\A$ was established in \cite[Theorem $3.1$]{CH}. For any $k,\l\in \{0,1,2,\ldots\}$ and $a<b\in \R$, let $\F$ denote the sigma algebra generated by the set
 \[\{\A_i(x): (i,x)\notin\{k+1,k+2,\ldots,k+\l\}\times(a,b)\}\,.\]
 Then the conditional distribution of $\A_{\big|\{k+1,2,\ldots,k+\l\}\times[a,b]}$ given $\F$ is equal to the law of $\l$ independent Brownian bridges $B_1,B_2,\ldots, B_\l:[a,b]\mapsto \R$ with diffusion parameter $2$ such that $B_i(a)=\A_{k+i}(0)$ and $B_i(b)=\A_{k+i}(b)$, for all $i=1,2,\ldots, \l$, conditioned on the event
 \[\A_k(r)>B_1(r)>B_2(r)>\ldots>B_\l(r)>\A_{k+\l+1}(r)\]
for all $r\in [a,b]$. For this to hold for $k=0$, we set $\A_0\equiv\infty$.

\subsection{Properties of the Airy sheet} The Airy sheet $\S:\R^2\mapsto \R$ is a random continuous function that was constructed in \citeDOV\ in terms of the parabolic Airy line ensemble $\A$. We recall some of its properties here that will be used throughout this paper (see Definition $1.2$ and Lemma $9.1$ of \citeDOV).
\begin{enumerate}
\item[(i)] \textit{Translation invariance}: $\S$ has the same law as $\S(\cdot+t,\cdot+t)$ for all $t\in \R$.
\item[(ii)] $\S(0,\cdot)=\A_1(\cdot)$.
\item[(iii)] $\S(x,y)\overset{d}{=} \S(y,x)$ and $\S(x,y)\overset{d}{=} \S(-x,-y)$. 
\end{enumerate}

\section{Geodesics in the Airy line ensemble}\label{s:geo}
In this section we will construct infinite geodesics between $x\in \Q^+$ and $y\in \Q$ and study their properties. But before proceeding, we need to recall a couple of theorems.

  Let $\W B^n$, the Brownian $n$-melon, be described as $n$ independent standard Brownian motions conditioned never to collide (in the sense of the Doob $h$-transform with $h(x)=\prod_{i<j}(x_i-x_j)$ for $x\in \R^n$, which is a positive harmonic function on the Weil chamber $\{x\in \R^n: x_1>x_2>\ldots>x_n\}$). The following theorem was proved in many parts, see Pr{\"a}hofer and Spohn \cite{prahofer2002scale}, Johansson \cite{johansson2003discrete}, Adler and Van Moerbeke \cite{adler2005pdes} and Corwin and Hammond \cite{CH}.

\begin{theorem}\label{t:conv}Define the rescaled melon $A^n=(A_1^n,\ldots, A_n^n)$ by
\[A_i^n(y)=n^{1/6}\left ((\W B^n)_i(1+2yn^{-1/3})-2\sqrt{n}-2yn^{1/6}\right)\,.\]
Then $A^n$ converges in law to the (parabolic) Airy line ensemble $\A=(\A_1,\A_2,\ldots)\in \C^\N$ with respect to product of uniform-on-compact topology on $\C^\N$.
\end{theorem}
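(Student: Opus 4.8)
The statement to prove is Theorem~\ref{t:conv}: the rescaled Brownian $n$-melon $A^n$ converges in law to the parabolic Airy line ensemble $\mathcal A$ in $\C^{\mathbb N}$ with the uniform-on-compacts topology.

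\section*{Proof proposal for Theorem~\ref{t:conv}}

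The plan is to follow the classical three-step route for convergence of line ensembles: establish convergence of all finite-dimensional distributions via the determinantal structure of nonintersecting Brownian motions, prove tightness of $\{A^n\}$ in $\C^\N$ using the exact Brownian Gibbs property enjoyed by the prelimiting ensembles, and then identify any subsequential limit with $\A$. First I would record that, by the O'Connell--Yor representation \cite{o2002representation} of the $h$-transform, $\W B^n$ is precisely a system of $n$ independent standard Brownian motions started at the origin and conditioned never to collide, i.e.\ Dyson Brownian motion with $\beta=2$ started from $0$. Its fixed-time marginal at time $t$ is the vector of ordered eigenvalues of $\sqrt t$ times an $n\times n$ GUE matrix, and the full space--time process is a determinantal point process with the extended Hermite kernel.

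For the finite-dimensional distributions I would perform the edge rescaling in the statement: the centering by $2\sqrt n+2yn^{1/6}$ removes the leading (linear-in-$y$) behavior of the spectral edge, while zooming by $n^{1/6}$ in space and $n^{-1/3}$ in time is the $2:3$ scaling that produces the parabolic Airy process (the $-y^2$ parabola emerging from the next-order term in the edge expansion of the semicircle law). A standard steepest-descent analysis of the extended Hermite kernel under this scaling, as carried out in Pr\"ahofer--Spohn \cite{prahofer2002scale} and Johansson \cite{johansson2003discrete}, shows it converges, uniformly on compact sets, to the extended Airy kernel. Since $\A$ is the determinantal line ensemble associated to this kernel, this yields convergence of all finite-dimensional distributions of $A^n$ to those of $\A$, and, from the same kernel estimates, uniform-in-$n$ tightness of each one-point marginal $A^n_k(y)$ for every fixed $k$ and $y$.

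The main obstacle is tightness of the laws of $A^n$ in $\C^\N$; since a compact subset of $\R\times\N$ involves only finitely many lines, this reduces to tightness of $(A^n_1,\dots,A^n_k)$ in $\C([a,b])^k$ for each fixed $k$ and $[a,b]$. The key input, due to Corwin--Hammond \cite{CH}, is that conditionally on its values outside a space--time box, $\W B^n$ consists of independent Brownian bridges conditioned to stay ordered; the rescaling in the theorem is calibrated so that $A^n$ inherits this Brownian Gibbs property with diffusion parameter $2$, the local Brownian scaling being preserved by the $n^{1/6}$--$n^{-1/3}$ choice of exponents. Feeding the uniform one-point control from the previous step into the Gibbs resampling produces equicontinuity of each line $A^n_k$ on compact intervals, uniformly in $n$, and rules out the lower-indexed lines escaping to $-\infty$; hence the laws are tight. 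The delicate points here are the modulus-of-continuity bound with constants uniform in both $n$ and the index $k$, and controlling the probability that a bridge pushed down by its lower neighbour dips far below its endpoints, both handled by monotone comparison of the conditioned bridges to unconditioned Brownian bridges.

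Finally, any subsequential limit of $A^n$ in $\C^\N$ is a continuous line ensemble which, by the first two steps, has the finite-dimensional distributions of $\A$ and (passing the Gibbs property to the limit) satisfies the Brownian Gibbs property; as these properties determine the law, the limit is $\A$. Combined with tightness, this gives $A^n\Rightarrow\A$ in $\C^\N$ with the uniform-on-compact topology, which is the assertion of the theorem.
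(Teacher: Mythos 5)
The paper does not prove Theorem \ref{t:conv}; it records it as a known result, citing Pr\"ahofer--Spohn \cite{prahofer2002scale}, Johansson \cite{johansson2003discrete}, Adler--Van Moerbeke \cite{adler2005pdes}, and Corwin--Hammond \cite{CH}. Your outline is a faithful sketch of how the theorem is established across those references: the finite-dimensional convergence comes from edge steepest-descent asymptotics of the extended Hermite kernel to the extended Airy kernel, and the upgrade from FDD convergence to convergence in $\C^\N$ under the uniform-on-compact topology is exactly what Corwin and Hammond supply, via the exact Brownian Gibbs property of the prelimiting ensembles and the resulting tightness with moduli of continuity uniform in $n$. One minor remark: once tightness and FDD convergence are in hand, the identification of subsequential limits is immediate, so passing the Gibbs property to the limit is not strictly needed for the convergence statement itself; it is needed in \cite{CH} to show that the limiting object $\A$ satisfies the Brownian Gibbs property, which is what the present paper subsequently relies upon.
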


\sourav{The next definition of the Airy sheet is quoted from \citeDOV, Definition $1.2$.}
\begin{definition}
The Airy sheet $\S$ can be coupled with the (parabolic) Airy line ensemble $\A$ so that $\S(0,\cdot)=\A_1(\cdot)$ and almost surely
 for all $(x,y,z)\in \Q^+\times \Q^2$, there exists a random integer $K_{x,y,z}$ such that for all $k\ge K_{x,y,z}$ \begin{equation}\label{e:defS}
\A[x_k\to (z,1)]-\A[x_k\to (y,1)]=\S(x,z)-\S(x,y)\,,
\end{equation}
where $x_k=(-\sqrt{k/2x},k)$.
\end{definition}
We shall use this coupling of the Airy sheet throughout the paper.

For $x\leq y\in \R$ and $\l\in \N$, we shall denote the rightmost geodesic between $(x,\l)$ and $(y,1)$ in the Airy line ensemble $\A$ by $\pi[(x,\l)\to y]$.

Next we define the infinite geodesics in the Airy line ensemble.

\begin{definition}\label{d:infgeo}For any $x\in\Q^+$ and $y\in \Q$ with $x_k=(-\sqrt{k/2x},k)$, we define the geodesic $\pi[x\to y]$ as the almost sure pointwise limit of $\pi[x_k\to y]$ as $k\to \infty$, whenever the limit exists. We define the length of the geodesic $\pi[x\to y]$ as $\S(x,y)$.
\end{definition}
\begin{figure}
\includegraphics[width=5in]{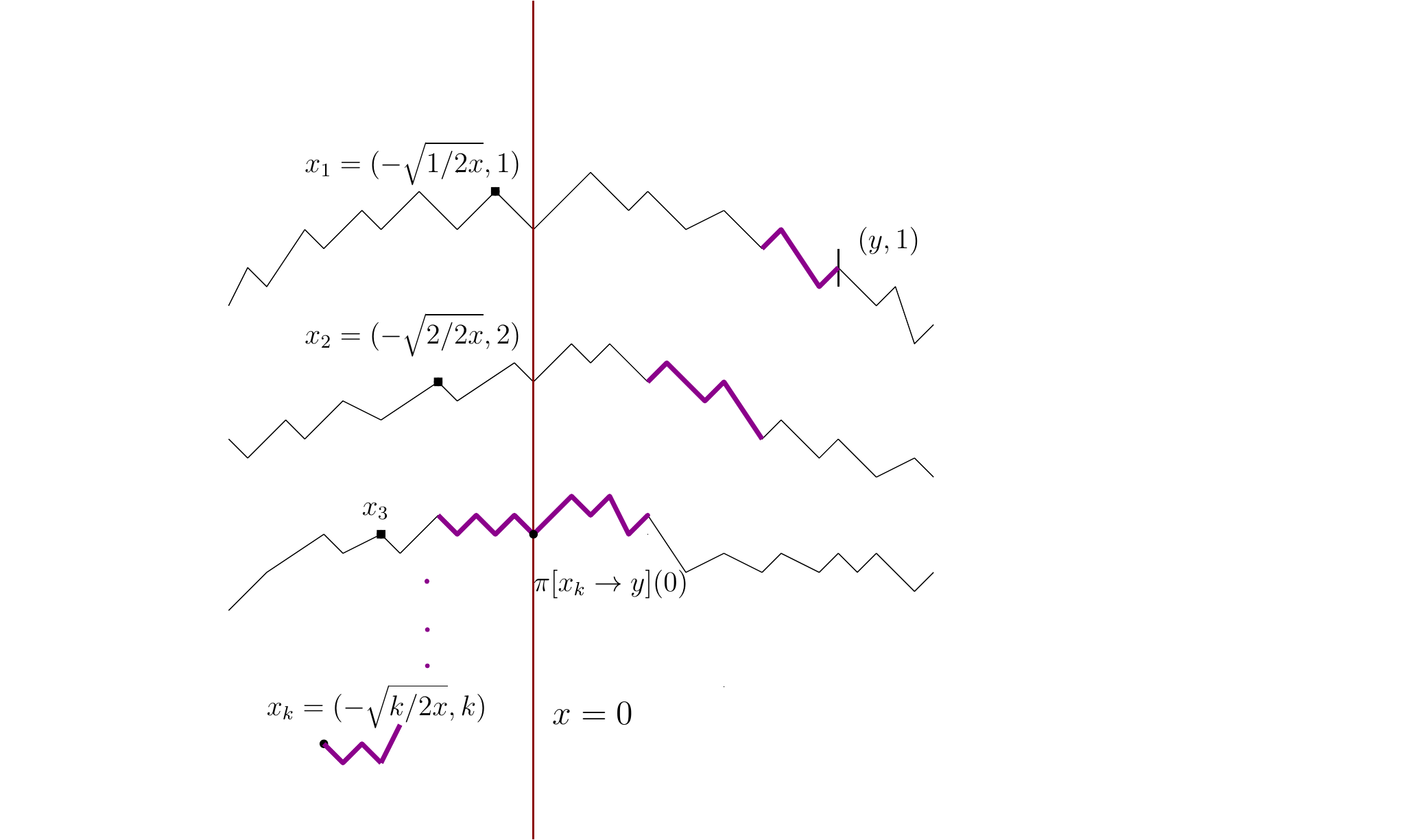}
\caption{The lines in the above figure denote the parabolic Airy line ensemble $(\A_1,\A_2,\ldots)$; we only show the first three lines here. The thick purple path here denotes the last passage path or geodesic from $x_k=(-\sqrt{k/2x},k)$ to $(y,1)$ and is denoted by $\pi[x_k\to y]$, which is a non-increasing function from $[-\sqrt{k/2x},y]$ to $\{1,2,\ldots, k\}$. For example, in the figure, $\pi[x_k\to y](0)=3$. Almost surely, the paths $\pi[x_k\to y]$ have a pointwise limit as $k\to \infty$, which is denoted by $\pi[x\to y]$.}
\label{F.1}
\end{figure}
\sourav{Thus $\pi[x\to y]$ is a non-increasing cadlag function from $(-\infty,y]$ to $\N$ with $\pi[x\to y](y)=1$. For any $w\in (-\infty,y]$, $\pi[x\to y](w)=r\in \N$ if and only if the path $\pi[x\to y]$ at the point $w$ passes through the $r$-th line of the Airy line ensemble, see Figure \ref{F.1}.} 
The following lemma guarantees that such geodesics exist almost surely for all rationals $x,y$.
\begin{lemma}\label{l:infgeo}Almost surely for all $(x,y)\in \Q^+\times \Q$, the geodesic $\pi[x\to y]$ exists. Moreover, for any $y\in \Q$,
\begin{equation}\label{e:pitranslate}
\pi[x\to y](\cdot+y)\overset{d}{=}\pi[x\to 0](\cdot)\,.
\end{equation}
Also for any $x\in \Q^+, y<z\in \Q$, almost surely there exists a random $T\leq y \in \R$ and $K\in \N$ (depending on $x,y,z$) such that
\begin{equation}\label{e:loc}
\pi[x\to y](T)=\pi[x\to z](T)=\pi[x_k\to y](T)=\pi[x_k\to z](T)\,,
\end{equation}
for all $k\geq K$. That is, the paths $\pi[x_k\to y]$, $\pi[x_k\to z]$, $\pi[x\to y]$ and $\pi[x\to z]$ intersect for all large $k$. Moreover, for all $t\geq T$ and $k\geq K$,
\begin{equation}\label{e:unique}
\pi[x\to y](t)=\pi[x_k\to y](t)\qquad \mbox{ and }\qquad \pi[x\to z](t)=\pi[x_k\to z](t)\,.
\end{equation}
And if the common value in \eqref{e:loc} is denoted by $d(T)$, then almost surely for all $y\leq y_1\leq y_2\leq z$,
\begin{equation}\label{e:wow}
\S(x,y_1)-\S(x,y_2)=\A[(T,d(T))\to (y_1,1)]-\A[(T,d(T))\to (y_2,1)]\,.
\end{equation}
\end{lemma}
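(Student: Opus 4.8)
The plan is to build the infinite geodesic $\pi[x\to y]$ as a limit of the finite geodesics $\pi[x_k\to y]$ by exploiting the monotonicity built into Property \ref{p:porder} and the coupling \eqref{e:defS} of the Airy sheet with the line ensemble. First I would fix $x\in\Q^+$ and two points $y<z\in\Q$, and observe that by the ordering of geodesics (Property \ref{p:porder}), the rightmost geodesics $\pi[x_k\to y]$ and $\pi[x_k\to z]$ are ordered: $\pi[x_k\to y](s)\le \pi[x_k\to z](s)$ for $s$ in the common domain. The key structural input is that on the parabolic Airy line ensemble, geodesics emanating from very deep points $x_k=(-\sqrt{k/2x},k)$ must, by the time they reach a fixed window near the top, have coalesced onto a common line; this is what \eqref{e:defS} encodes, since the defining relation there says that for $k\ge K_{x,y,z}$ the increments $\A[x_k\to(z,1)]-\A[x_k\to(y,1)]$ stabilize to $\S(x,z)-\S(x,y)$. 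I would extract from the proof of \eqref{e:defS} in \citeDOV\ (or re-derive via the melon convergence Theorem \ref{t:conv}) that this stabilization is witnessed by an actual coalescence point: there is a random $T\le y$ and $K\in\N$ such that all four paths $\pi[x_k\to y]$, $\pi[x_k\to z]$ (for $k\ge K$) and their would-be limits pass through a common point $(T,d(T))$, which gives \eqref{e:loc}.

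Next I would establish \eqref{e:unique}, namely that once the finite geodesics agree with each other at the coalescence time $T$, they agree for \emph{all} later times $t\ge T$ and all $k\ge K$. This follows from uniqueness of the rightmost geodesic between two fixed endpoints together with the metric composition law (Property \ref{p:metrcomp}): by \eqref{e:triangleineq} the restriction of $\pi[x_k\to y]$ to $[T,y]$ is the rightmost geodesic from $(T,d(T))$ to $(y,1)$ in $\A$, and this object does not depend on $k$. Hence for each fixed $t\ge T$ the sequence $\pi[x_k\to y](t)$ is eventually constant, which gives pointwise convergence and therefore existence of $\pi[x\to y]$ as in Definition \ref{d:infgeo}; the limit is cadlag and nonincreasing with value $1$ at $y$ because each finite geodesic is. To upgrade from fixed $(x,y,z)$ to \emph{all} $(x,y)\in\Q^+\times\Q$ simultaneously, I would take a countable union over the (countably many) triples and use that each relevant event has full probability. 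The translation identity \eqref{e:pitranslate} is then immediate from the translation invariance of the Airy sheet and line ensemble in the spatial variable, applied to the defining limit.

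Finally, \eqref{e:wow} is the payoff: for $y\le y_1\le y_2\le z$ the path $\pi[x\to z]$ restricted to $[T,z]$ passes through $(T,d(T))$, so by the metric composition law the increment $\S(x,y_1)-\S(x,y_2)$ — which by Definition \ref{d:infgeo} equals the difference of lengths of $\pi[x\to y_1]$ and $\pi[x\to y_2]$, and by \eqref{e:defS} equals $\A[x_k\to(y_1,1)]-\A[x_k\to(y_2,1)]$ for large $k$ — can be rewritten with the common base point replaced by $(T,d(T))$, i.e.\ as $\A[(T,d(T))\to(y_1,1)]-\A[(T,d(T))\to(y_2,1)]$. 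This uses only that $T$ and $d(T)$ lie on all the relevant geodesics, so that splitting the geodesic at $(T,d(T))$ is lossless.

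The main obstacle I anticipate is the coalescence step, i.e.\ producing the single random time $T$ and line index $d(T)$ in \eqref{e:loc} that works simultaneously for the two finite geodesics, the two infinite geodesics, and all large $k$. The stabilization of \emph{last passage values} in \eqref{e:defS} is given to us, but converting that into stabilization of the \emph{paths} requires an argument that the rightmost geodesics do not merely have equal length increments but actually share a common vertex far enough to the left; this needs the ordering Property \ref{p:porder} to trap the geodesics between two that coalesce, plus an argument (again via the melon convergence and the Brownian Gibbs property, or directly via the structure of last passage in $\A$) that geodesics from arbitrarily deep points cannot stay separated in a fixed top window. Everything downstream of that — \eqref{e:unique}, the pointwise limit, \eqref{e:wow} — is then a fairly routine consequence of uniqueness of rightmost geodesics and the metric composition law.
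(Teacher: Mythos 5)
Your overall strategy matches the paper's closely: both proofs extract the coalescence of the finite geodesics $\pi[x_k\to y]$, $\pi[x_k\to z]$ from the machinery underlying the \citeDOV\ construction (the Skorohod coupling in the melon convergence, together with the tightness of the $Z_k^n$ and the non-disjointness of $\pi\{\bar X_1\to \hat y\}_n$ and $\pi\{\bar X_2\to\hat z\}_n$), then trap $\pi[x_k\to\cdot]$ between two geodesics from nearby rational starting points $X_1<x<X_2$ using the ordering Property~\ref{p:porder}, and finally use uniqueness of the rightmost geodesic on $[T,y]$ to upgrade intersection at one point to agreement on all of $[T,y]$. Your identification of the coalescence step as the main obstacle is also accurate, and your proposed route to close it is the one the paper takes.

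There is, however, a genuine gap in your treatment of the translation identity \eqref{e:pitranslate}. You claim it is \emph{immediate} from translation invariance of the Airy sheet and line ensemble applied to the defining limit, but this does not work directly. The base points $x_k=(-\sqrt{k/2x},k)$ are tied to the origin: applying the spatial shift by $y$ (stationarity of $\A_i(t)+t^2$) sends the geodesic from $x_k$ to $(y,1)$ to a geodesic from $x_k-(y,0)=(-\sqrt{k/2x}-y,k)$ to $(0,1)$, which is \emph{not} $\pi[x_k\to 0]$. Thus stationarity alone yields $\pi[x\to y](\cdot+y)\overset{d}{=}\lim_k\pi[x_k-(y,0)\to 0](\cdot)$, and one must separately show that this limit coincides with $\pi[x\to 0]=\lim_k\pi[x_k\to 0]$. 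The paper fills this in by first proving that $\pi[x\to y]$ is also the a.s.\ pointwise limit of the geodesics $\pi[x_k+(y,0)\to y]$, using the asymptotic $Z_k(\cdot,y)/\sqrt k\to -1/\sqrt{2x}$ from the \citeDOV\ coupling to show that both $-\sqrt{k/2x}$ and $-\sqrt{k/2x}+y$ are eventually trapped in $(Z_k(X_1,y),Z_k(X_2,y))$; only then is stationarity applied. Your write-up should incorporate an argument of this kind; without it, \eqref{e:pitranslate} is not justified. A smaller imprecision: for \eqref{e:wow} you invoke \eqref{e:defS}, which is stated only for rational endpoints, whereas \eqref{e:wow} is claimed for all real $y_1\le y_2$ in $[y,z]$; the passage from rationals to reals should go through the ordering and the common vertex $(T,d(T))$ rather than \eqref{e:defS} directly, as the paper does by reference to \citeDOV\ Lemma~8.5.
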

\begin{proof} For any $(x,y)\in \Q^+\times\Q$, we first construct a path, which we call $\pi[x,y]$ to begin with. Afterwards we show that $\pi[x_k\to y]$ converges to $\pi[x,y]$ pointwise almost surely, so that $\pi[x\to y]$ exists and equals $\pi[x,y]$ almost surely.

The points $1., 2., 3.$ below are quoted from points $2.,3.,4.$ of the itemized list in page $37$ of \citeDOV. For $n\in \N$, let
\[\bar{x}=2xn^{-1/3}\,,\qquad \mbox{ and } \qquad \hat{y}=1+2yn^{-1/3}\,.\]
Let $\gamma_n:=\pi\{\bar{x}\to \hat{y}\}_n$ be the rightmost last passage path between $\bar{x}$ and $\hat{y}$ in the melon $\W B^n$. For $n\in \N$ and $k\in \{1,2,\ldots, n\}$, let $Z_k^n(x,y)$ be the supremum of $w$ so that $(w,k)$ lies along $\gamma_n$. Then, by \citeDOV, Lemma $7.1$, it follows that for each $k\in \N$, the sequence $\{Z^n_k(x,y)\}_n$ is tight. By Skorohod representation theorem, there exists a coupling of the process $\W B^n$ and $\A$ and a subsequence, such that along that subsequence, almost surely
\begin{itemize}
\item[1.] The melon $\W B^n$ in the scaling in Theorem \ref{t:conv} converges to the Airy line ensemble $\A$ uniformly on compact sets in $\Z\times \R$.

\item[2.]We have for all $(x,y)\in \Q^+\times \Q$,
\[Z_k^n(x,y)\to Z_k(x,y) \qquad \mbox{ for all } k\in \N.\]
Moreover, as $k\to \infty$,
\begin{equation}\label{e:zkconv}
Z_k(x,y)/\sqrt{k} \to -1/\sqrt{2x}\,.
\end{equation}

 \item[3.] For every triple$(x,y,z)\in \Q^+\times \Q^2$ with $y\le z$, there exist random points $X_1<x<X_2 \in \Q^+$ such that
\[\pi\{\bar{X_1}\to \hat{y}\}_n \qquad \mbox{ and } \qquad \pi\{\bar{X_2}\to \hat{z}\}_n\]
are not disjoint for all large enough $n$.
\end{itemize}

Then we define $\pi[x, y]:(-\infty, y]\to \Z$ as the non-increasing cadlag function given by
\[\pi[x,y](t)=\min\{k\in \N:Z_{k+1}(x,y)\leq t\}\]
for all $t\in (-\infty,y]$. Thus, $Z_k(x,y)$ is the supremum of $w$ so that $(w,k)$ lies along  $\pi[x, y]$. The path $\pi[x,y]$ is an almost sure pointwise limit of $\gamma_n$ over the subsequence. Moreover, Property $1$ above guarantees that $\pi[x, y]$ is a rightmost last passage path when restricted to any compact interval.

\sourav{Now fix any $x\in \Q^+, y<z\in Q$ as in the statement of Lemma \ref{l:infgeo}}. Let $X_1<x<X_2\in \Q^+$ be as in Property $3.$ above, that is, $\pi\{\bar{X_1}\to \hat{y}\}_n$ and $\pi\{\bar{X_2}\to \hat{z}\}_n$ are not disjoint for all large enough $n$. Because of \eqref{e:zkconv}, there exists a random $K\in \N$ such that
\[-\sqrt{k/2x}\in (Z_k(X_1,y), Z_k(X_2,z))\,.\]
for all $k\geq K$. From here \eqref{e:loc} follows by passing to the limit as $n\to \infty$ and using the ordering of the geodesics, with $\pi[x,y], \pi[x,z]$ in place of $\pi[x\to y],\pi[x\to z]$ respectively. The last equation \eqref{e:wow} follows by the ordering of geodesics. (The argument here is exactly the same as that of \citeDOV,  Lemma $8.5$., which can be seen for more details.)

Observe that $\pi[x,y]$ and $\pi[x_k\to y]$ restricted to $[T,y]$ are both rightmost geodesics between $(T,d(T))$ and $(y,1)$. Hence
for all $t\geq T$ and $k\geq K$,
\begin{equation}\label{e:pix,y}
\pi[x, y](t)=\pi[x_k\to y](t)\qquad \mbox{ and }\qquad \pi[x, z](t)=\pi[x_k\to z](t)\,.
\end{equation}

Next we claim that for $(x,y)\in \Q^+\times\Q$, almost surely for all $r\in \Z; r< y$, there exists a random $K\in \N$ (depending on $x,y,r$) such that for all $t\in [r,y]$ and all $k\geq K$,
\[\pi[x, y](t)=\pi[x_k\to y](t)\,.\]
Indeed, by \eqref{e:pix,y} with $x\in \Q^+$ and $r<y$, we have that there exists a random $T\leq r$ and $K\in \N$ such that for all $t\in [T,y]$ and all $k\geq K$,
\[\pi[x, y](t)=\pi[x_k\to y](t)\,.\]
Since $T\leq r$ and $[r,y]\subseteq [T,y]$, the claim follows. Hence almost surely $\pi[x_k\to y]$ converges $\pi[x, y]$ pointwise; thus $\pi[x\to y]=\pi[x,y]$ almost surely. Similarly, $\pi[x\to z]=\pi[x,z]$ almost surely. This, together with the last paragraph, proves \eqref{e:loc}. Also \eqref{e:unique} follows because both $\pi[x\to y]$ and $\pi[x_k\to y]$ are rightmost geodesics between $[T,y]$, a similar statement holds for $\pi[x\to z]$ and $\pi[x_k\to z]$.

Finally we prove \eqref{e:pitranslate}. \sourav{Fix any $x\in \Q^+, y\in \Q$ as in the statement of Lemma \ref{l:infgeo}}. Let $X_1<x<X_2\in \Q^+$ be as in Property $3$ above. Because of \eqref{e:zkconv} again, almost surely there exists a random $K\in \N$ such that
\[-\sqrt{k/2x}+y\in (Z_k(X_1,y), Z_k(X_2,y))\, \qquad \mbox{and }-\sqrt{k/2x}\in (Z_k(X_1,y), Z_k(X_2,y))\,.\]
for all $k\geq K$. This is because by \eqref{e:zkconv} and Property $3$, almost surely for all large enough $k$, $Z_k(X_1,y)/\sqrt{k}<-1/\sqrt{2x}+y/\sqrt{k}< Z_k(X_2,y)/\sqrt{k}$. Hence, arguing as before, we have $\pi[x\to y]$ is also the almost sure pointwise limit of $\pi[x_k+(y,0)\to y]$, the rightmost geodesic between $x_k+(y,0)=(-\sqrt{k/2x}+y,k)$ and $(y,1)$. Now since a geodesic in the parabolic Airy line ensemble $\A$ is also a geodesic in the stationary Airy line ensemble $(\A_i(t)+t^2:i=1,2,\ldots)$ (see the discussion following \eqref{e:gap}), hence
\[\pi[x_k+(y,0)\to y](\cdot+y)\overset{d}{=}\pi[x_k\to 0](\cdot)\,,\]
for all $k\in \N$. Taking the limit as $k\to \infty$ on both sides, \eqref{e:pitranslate} follows.
\end{proof}


In the next lemma we show that the geodesics $\pi[x\to y]$ also enjoy the ordering of geodesics property.

\begin{lemma}\label{l:infporder}Let $x\leq z\in \Q^+$ and $y_1\leq y_2\in \Q$. Then almost surely for all $s\leq y_1$,
\[\pi[x\to y_1](s)\leq \pi[z\to y_2](s)\,.\]
\end{lemma}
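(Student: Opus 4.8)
\textbf{Proof plan for Lemma \ref{l:infporder}.}

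The plan is to reduce the statement about infinite geodesics to the already-established ordering for finite geodesics (Property \ref{p:porder}) by passing to the limit along the approximating sequences $x_k = (-\sqrt{k/2x},k)$ and $z_k = (-\sqrt{k/2z},k)$. First I would note that since $x \le z$ as positive rationals, the starting heights of the two finite geodesics agree (both are at level $k$), while their horizontal starting points satisfy $-\sqrt{k/2x} \le -\sqrt{k/2z}$, i.e.\ the geodesic $\pi[x_k \to y_1]$ starts to the left of $\pi[z_k \to y_2]$; and its right endpoint $y_1 \le y_2$ also lies to the left. Thus the hypotheses of Property \ref{p:porder} are satisfied with $(x_1,x_2,y_1,y_2) = (-\sqrt{k/2x}, -\sqrt{k/2z}, y_1, y_2)$ and $m=1$, $\l = k$, so that for every $s \in [-\sqrt{k/2z}, y_1]$ we have $\pi[x_k \to y_1](s) \le \pi[z_k \to y_2](s)$.

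Next I would take $k \to \infty$. By Definition \ref{d:infgeo} and Lemma \ref{l:infgeo}, almost surely $\pi[x_k \to y_1] \to \pi[x \to y_1]$ and $\pi[z_k \to y_2] \to \pi[z \to y_2]$ pointwise on their (expanding) domains, and moreover by \eqref{e:unique} the convergence is eventually exact: for each fixed $s \le y_1$ there is a random $K = K(s)$ such that for all $k \ge K$, $\pi[x_k \to y_1](s) = \pi[x \to y_1](s)$ and likewise for the $z$-geodesics. (Here one uses the version of \eqref{e:loc}–\eqref{e:unique} applied separately to the pairs $(x,y_1)$ and $(z,y_2)$; since the claim in the proof of Lemma \ref{l:infgeo} gives, for any integer $r < y_i$, a $K$ with exact agreement on all of $[r,y_i]$, one gets exact agreement simultaneously for all $s$ in any fixed compact interval $[r,y_1]$, not just a single $s$.) Therefore, fixing $s \le y_1$ and choosing $r \in \Z$ with $r \le s$, for all $k$ large enough that both $-\sqrt{k/2z} \le r$ and $k \ge K$, the finite inequality reads $\pi[x\to y_1](s) = \pi[x_k \to y_1](s) \le \pi[z_k \to y_2](s) = \pi[z \to y_2](s)$, which is the desired bound. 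Since this holds almost surely for each fixed rational (or indeed real) $s \le y_1$ and both sides are cadlag non-increasing functions of $s$, taking a countable dense set of $s$ and using right-continuity upgrades it to hold almost surely for all $s \le y_1$ simultaneously.

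I expect the only real subtlety — the rest being bookkeeping — to be making sure the "eventually exact" convergence in \eqref{e:unique} can be arranged \emph{uniformly over a compact interval of $s$}, rather than only pointwise in $s$ with an $s$-dependent $K$; but the proof of Lemma \ref{l:infgeo} already delivers exactly this (the claim there produces, for any integer $r < y_i$, a single random $K$ valid for all $t \in [r, y_i]$), so one should just invoke that form. A secondary point is to handle the lower end of the domain: for small $k$ the finite geodesic $\pi[x_k \to y_1]$ is only defined on $[-\sqrt{k/2x}, y_1]$, so the inequality on a fixed $[r, y_1]$ only makes sense once $k$ is large enough that $-\sqrt{k/2z} \le -\sqrt{k/2x} \le r$, which happens for all large $k$ since $\sqrt{k/2x} \to \infty$. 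With these two observations in place the argument is complete.
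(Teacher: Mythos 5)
Your proposal is correct and follows essentially the same route as the paper's proof: both pass from Lemma \ref{l:infgeo} (eventual exact agreement between the infinite geodesics and the prelimiting geodesics $\pi[x_k\to y_1]$, $\pi[z_k\to y_2]$) to Property \ref{p:porder} applied to those prelimiting geodesics, noting $-\sqrt{k/2x}\le -\sqrt{k/2z}$. The paper states this more tersely, but you identify and correctly discharge the same two bookkeeping points (uniformity of $K$ over compact $s$-intervals, and taking $k$ large enough that the domain $[-\sqrt{k/2z},y_1]$ contains the desired $s$); note a small sign slip in your last paragraph, where the chain should read $-\sqrt{k/2x}\le -\sqrt{k/2z}\le r$, but this does not affect the argument.
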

\begin{proof}By Lemma \ref{l:infgeo} there exists $K$ such that for all $k\geq K$
\[\pi[x\to y_1](s)=\pi[x_k\to y_1](s)\,, \qquad \mbox{ and }\qquad \pi[z\to y_2](s)=\pi[z_k\to y_2](s)\,.\]
Since $x\leq z$, we have $-\sqrt{k/2x}\leq -\sqrt{k/2z}$. Hence the lemma follows by applying Property \ref{p:porder} to the geodesics $\pi[x_k\to y_1]$ and $\pi[z_k\to y_2]$.
\end{proof}
Recall that for $\l\in \N$, $Z_\l(x,y)$ is the supremum of $z$ so that $(z,\l)$ lies along $\pi[x\to y]$.
\begin{lemma}\label{l:useful}Fix any $x\in \Q^+$ and $\l \in \N$. Then
\[\lim_{y\to \infty, y\in \Q}\P(Z_\l(x,y)\leq 0)=0\,.\]
\end{lemma}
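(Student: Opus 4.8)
The plan is to show that as $y \to \infty$, the geodesic $\pi[x \to y]$ is forced to pass through line $\ell$ at a location that moves off to the right, so that the event $\{Z_\ell(x,y) \le 0\}$ becomes rare. The key mechanism is the ordering of geodesics from Lemma \ref{l:infporder}: if $Z_\ell(x,y) \le 0$, then at the spatial location $0$ the geodesic $\pi[x \to y]$ sits on a line with index strictly greater than $\ell$ (i.e. $\pi[x\to y](0) > \ell$), because $Z_\ell(x,y)$ is the supremum of the locations where the path is on line $\ell$. First I would fix a reference right-endpoint, say $y_0 \in \Q$ with $y_0 > 0$, and apply Lemma \ref{l:infporder} with $y_1 = y_0 \le y_2 = y$ and $x = z$: this gives $\pi[x \to y_0](s) \le \pi[x \to y](s)$ for all $s \le y_0$. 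That inequality points the wrong way for a direct comparison, so instead I would compare at the left: run $y$ to $+\infty$ and use that the geodesics $\pi[x\to y]$ are increasing (as functions, in the appropriate sense) in $y$ by Lemma \ref{l:infporder} applied with $y_1 \le y_2$ both large and $x=z$.

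More concretely, here is the argument I would carry out. Note $\{Z_\ell(x,y) \le 0\} = \{\pi[x\to y](0) \ge \ell+1\}$ for the cadlag version, up to the boundary case handled by right-continuity; it suffices to bound $\P(\pi[x\to y](0) \ge \ell+1)$. By the ordering of geodesics (Lemma \ref{l:infporder}), for $y_1 \le y_2$ in $\Q$ we have $\pi[x\to y_1](s) \le \pi[x \to y_2](s)$ for all $s \le y_1$; in particular, taking $s = 0 < y_1$, the random variable $\pi[x\to y](0)$ is monotone nondecreasing in $y$ along $y \in \Q \cap (0,\infty)$. Hence $\lim_{y\to\infty}\P(\pi[x\to y](0) \ge \ell+1)$ exists and equals $\P(\bigcap_{y}\{\pi[x\to y](0)\ge \ell+1\})$; wait — monotone \emph{increasing} in $y$ makes the events increasing, so the limit is $\P(\bigcup_y \{\pi[x\to y](0) \ge \ell+1\})$, which is the wrong direction. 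So the monotonicity I actually want is the opposite: I should compare geodesics with \emph{fixed} right endpoint but note that a longer horizon lets the geodesic drop to higher-index lines more to the left, pushing $Z_\ell$ to the right. Let me restate: $Z_\ell(x,y)$ is nondecreasing in $y$. Indeed for $y_1 \le y_2$, $\pi[x\to y_1]$ and $\pi[x\to y_2]$ agree on $(-\infty, T]$ for some random $T \le y_1$ by \eqref{e:loc}, and on that common stretch $Z_\ell$ is determined the same way; combined with the ordering $\pi[x\to y_1](s) \le \pi[x\to y_2](s)$ one gets $Z_\ell(x,y_1) \le Z_\ell(x,y_2)$. Therefore the events $\{Z_\ell(x,y) \le 0\}$ are nonincreasing in $y$, so $\P(Z_\ell(x,y)\le 0) \downarrow \P\big(\bigcap_{y\in\Q^+}\{Z_\ell(x,y)\le 0\}\big)$, and it remains to show this intersection event has probability zero.

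To kill the intersection event, I would use \eqref{e:zkconv}: $Z_k(x,y)/\sqrt k \to -1/\sqrt{2x}$ as $k \to \infty$, for each fixed $(x,y)$. But I need information for fixed small $k = \ell$ as $y \to \infty$, which requires a different handle — essentially the statement that the finite geodesic $\pi[x_m \to y]$, for large $m$, spends a long initial stretch on very high-index lines and only climbs up to line $\ell$ near the far right as $y$ grows. I would get this from the melon / Airy line ensemble convergence (Theorem \ref{t:conv}) together with the limit shape: the last passage path from depth $m \asymp \sqrt{k/2x}$ down to line $1$ at horizontal coordinate $y$ has a deterministic macroscopic trajectory (a parabola) which, for the rescaling in Definition \ref{d:infgeo}, forces the crossing of any fixed level $\ell$ to occur at horizontal location $\to +\infty$ as $y \to \infty$; more precisely, $Z_\ell(x,y) \to \infty$ in probability (indeed a.s. by monotonicity) as $y \to \infty$. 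The cleanest route is: if $Z_\ell(x,y) \le 0$ for all rational $y > 0$, then by monotonicity $\sup_y Z_\ell(x,y) \le 0$, but the infinite geodesic $\pi[x \to y]$ restricted to $[0,y]$ is a genuine rightmost geodesic in $\A$ from $(T,d(T))$ to $(y,1)$ with $d(T)$ eventually much larger than $\ell$, and such a geodesic must cross level $\ell$ somewhere in $[T,y]$; as $y\to\infty$ the last crossing time of level $\ell$ cannot stay bounded by $0$ because the Airy line ensemble has no infinite geodesic staying on lines $> \ell$ over $[0,\infty)$ with finite index — formally, one invokes that $Z_\ell(x,y)$, being the location where $\pi[x\to y]$ leaves line $\ell+1$ for line $\ell$, satisfies $Z_\ell(x,y) \ge Z_\ell(x,y')$ for $y \ge y'$ and tends to $\infty$ since otherwise there is a bi-infinite-type geodesic contradiction. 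I expect the \textbf{main obstacle} to be exactly this last step: ruling out that the crossing location $Z_\ell(x,y)$ stays confined to $(-\infty,0]$ for all $y$, i.e. transferring the macroscopic parabolic limit-shape behavior (or an appropriate coalescence/monotonicity argument on geodesics) into a clean almost-sure statement. I would handle it by the monotone limit reduction above plus a comparison to a single finite geodesic $\pi[x_m \to y]$ for fixed large $m$, using \eqref{e:unique}/\eqref{e:loc} to identify $\pi[x\to y]$ with $\pi[x_m\to y]$ on $[T,y]$ and then a direct estimate (via Theorem \ref{t:conv} and tightness of the $Z^n_k$) that the crossing location of level $\ell$ by $\pi[x_m\to y]$ exceeds $0$ with probability $\to 1$ as $y \to \infty$.
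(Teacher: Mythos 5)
There is a genuine gap, and you flag it yourself: after the (correct) monotonicity reduction to showing $\P\big(\bigcap_{y\in\Q^+}\{Z_\ell(x,y)\le 0\}\big)=0$, the argument you sketch --- a macroscopic limit-shape / coalescence estimate implying that the finite geodesic $\pi[x_m\to y]$ crosses level $\ell$ at a location tending to $+\infty$ --- is never carried out, and it is not clear how one would extract such a statement from Theorem~\ref{t:conv} and the tightness of $Z^n_k$ alone. None of the surrounding lemmas gives control of the crossing location of a \emph{fixed} level $\ell$ as $y\to\infty$, so the final, crucial step of your proposal is unproven. (A smaller error: the translation $\{Z_\ell(x,y)\le 0\}=\{\pi[x\to y](0)\ge\ell+1\}$ is backwards. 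Since the geodesic is nonincreasing in $s$ and $Z_\ell$ is the \emph{last} time the path lies along level $\ell$, the event $\{Z_\ell(x,y)\le 0\}$ means the path has already dropped below $\ell$ by time $0$, i.e.\ $\pi[x\to y](0)\le \ell-1$. This does not affect your monotonicity conclusion, which is stated correctly afterwards.)

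The missing idea is the translation identity \eqref{e:pitranslate} established in Lemma~\ref{l:infgeo}: $\pi[x\to y](\cdot+y)\overset{d}{=}\pi[x\to 0](\cdot)$. Reading off the last level-$\ell$ crossing on both sides gives $Z_\ell(x,y)\overset{d}{=}y+Z_\ell(x,0)$, so $\P(Z_\ell(x,y)\le 0)=\P(Z_\ell(x,0)\le -y)\to 0$ as $y\to\infty$, because $Z_\ell(x,0)$ is an a.s.\ finite real-valued random variable. This single observation replaces the entire monotonicity-plus-limit-shape route you attempt; the heavy lifting was already done in proving \eqref{e:pitranslate}.
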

Because of ordering of geodesics, $\{Z_\l(x,y)> 0\}\subseteq \{Z_\l(x,y')> 0\}$ for any $y'>y\in \Q$. Thus, the above lemma implies that there exists a random $Y\in \Q^+$ such that almost surely
\[Z_\l(x,Y)>0\,.\]
\begin{proof}This follows immediately since by \eqref{e:pitranslate} of Lemma \ref{l:infgeo},
\[Z_\l(x,y)\overset{d}{=}y+Z_\l(x,0)\,.\]
\end{proof}

Finally, we need to define $\A[x\to (0,\l)]$ for $x\in \Q^+$ as a random function in $x$ for any $\l\in \N$ as follows. For $\l=1$, we define $\A[x\to (0,1)]=\S(x,0)$ and for all $\l>1,x\in \Q^+$, we define it as the almost sure limit of
\[\A[x_k\to (0,\l)]-\A[x_k\to (0,1)]+\S(x,0)\,,\]
as $k \to \infty$. For this definition to make sense, we need the above limit in $k$ to exist. This is the crux of the next theorem.

\begin{theorem}\label{t:ok} Almost surely for all $x\in \Q^+$, there exists a random integer $K'_{x,\l}$ such that for all $k\geq K'_{x,\l}$
\begin{equation*}
\A[x_k\to (0,\l)]-\A[x_k\to (0,1)]
\end{equation*}
does not depend on $k$. We define $\A[x\to(0,\l)]$ as
\[\A[x_k\to (0,\l)]-\A[x_k\to (0,1)]+\S(x,0)\]
for any such $k\geq K'_{x,\l}$.
\end{theorem}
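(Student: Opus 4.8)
The plan is to exploit the same localization mechanism already built into Lemma \ref{l:infgeo}: the point $x_k = (-\sqrt{k/2x}, k)$ recedes to the lower-left at a known rate, so for large $k$ the geodesics $\pi[x_k \to 0]$ all pass through a common point $(T, d(T))$ below and to the left of the origin, after which they coincide. First I would fix $x \in \Q^+$ and apply Lemma \ref{l:infgeo} (with, say, $y = 0$ and some auxiliary $z > 0$ chosen so that the stated conclusions hold; alternatively one can re-run the three-item argument from the proof of Lemma \ref{l:infgeo} directly) to produce a random $T \le 0$ and $K \in \N$ such that for all $k \ge K$ the geodesic $\pi[x_k \to 0]$ restricted to $[T, 0]$ is the rightmost geodesic from $(T, d(T))$ to $(0,1)$, where $d(T) = \pi[x \to 0](T)$ is independent of $k$. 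The key point is that the portion of $\pi[x_k \to 0]$ to the left of $T$ does not matter for the difference we are computing: I want to split the last passage value at the vertical line through $T$ using the metric composition law.

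Next, I would set up the decomposition. By Lemma \ref{l:useful} (and the remark following it) we may, after possibly enlarging $K$, also assume there is a random $Y \in \Q^+$ with $Z_\ell(x, Y) > 0$, but actually for the statement at hand it is cleaner to argue as follows. Enlarge $K$ so that additionally $-\sqrt{k/2x} < T$ for all $k \ge K$ (possible since $T$ is a fixed random number and $-\sqrt{k/2x} \to -\infty$), and so that $d(T) \ge \ell$; the latter can be arranged because, as $k$ grows, the geodesic $\pi[x_k \to 0]$ starts on line $k$ and the index $d(T) = \pi[x \to 0](T)$ at the fixed negative time $T$ can be forced above any given level by choosing $T$ sufficiently negative — one should choose $T$ small enough at the outset, which is legitimate since $Z_j(x,0) \to -\infty$ as $j \to \infty$ by \eqref{e:zkconv}. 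With $d(T) \ge \ell$, the reverse triangle inequality \eqref{e:triangleineq} and Property \ref{p:metrcomp} give, for $k \ge K$,
\[
\A[x_k \to (0,1)] = \A[x_k \to (T, d(T))] + \A[(T, d(T)) \to (0,1)]\,,
\]
because $(T, d(T))$ lies on the rightmost geodesic $\pi[x_k \to 0]$; and similarly, since the rightmost geodesic from $x_k$ to $(0,\ell)$ must also pass below line $d(T)$ at time $T$ (by ordering of geodesics, Property \ref{p:porder}, its value at $T$ is at most $d(T)$, and for $k$ large its starting level $k$ exceeds $d(T)$ so it crosses level $d(T)$ at some time $\le T$ — here one invokes the analogue of \eqref{e:loc} for the pair $(0,1), (0,\ell)$), we get
\[
\A[x_k \to (0,\ell)] = \A[x_k \to (T, d(T))] + \A[(T, d(T)) \to (0,\ell)]\,.
\]
Subtracting, the $k$-dependent prefix $\A[x_k \to (T, d(T))]$ cancels, leaving
\[
\A[x_k \to (0,\ell)] - \A[x_k \to (0,1)] = \A[(T, d(T)) \to (0,\ell)] - \A[(T, d(T)) \to (0,1)]\,,
\]
whose right-hand side does not involve $k$. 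This is exactly the claim, with $K'_{x,\ell} = K$, and the displayed definition of $\A[x \to (0,\ell)]$ is then consistent with \eqref{e:wow} in the sense that adding $\S(x,0) = \A[x \to (0,1)]$ recovers a genuine last passage quantity from the localized point.

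Finally I would handle the "almost surely for all $x \in \Q^+$" uniformity: each of the ingredients (Lemma \ref{l:infgeo}, the convergence \eqref{e:zkconv}, the ordering properties) holds almost surely simultaneously for all rational $x$ on a single full-measure event, so intersecting countably many such events gives the statement for all $x \in \Q^+$ at once. The main obstacle I anticipate is the second displayed decomposition — verifying that the rightmost geodesic from $x_k$ to $(0,\ell)$ genuinely passes through the *same* point $(T, d(T))$ for large $k$, rather than merely some point on line $d(T)$ near $T$. This requires an ordering-of-geodesics argument sandwiching $\pi[x_k \to (0,\ell)]$ between $\pi[x_k \to (0,1)]$ and the geodesic realizing a slightly-left starting point, exactly parallel to how \eqref{e:loc} is proved in Lemma \ref{l:infgeo}; I would model that step closely on the cited argument from \citeDOV, Lemma $8.5$, and on the proof of Lemma \ref{l:infgeo} above, rather than reproving it from scratch.
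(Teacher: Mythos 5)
Your overall strategy — localize all the relevant geodesics at a common point $(T,d(T))$ to the left of the origin, then split the last passage values there via the metric composition law so that the $k$-dependent prefix cancels — is exactly the paper's strategy, and the final displayed cancellation you write down is precisely the identity the paper arrives at. The difficulty is in how you set up the localization, and you flag the critical gap yourself: showing that the rightmost geodesic from $x_k$ to $(0,\ell)$ also passes through $(T,d(T))$. Your chosen route does not provide the ingredient needed to close that gap. You deliberately set aside the point $Y$ with $Z_\ell(x,Y)>0$ and instead propose to push $T$ more negative so that $d(T)=\pi[x\to 0](T)\ge \ell$. But making $d(T)$ large only tells you where the geodesic \emph{to $(0,1)$} sits at time $T$; it says nothing about where the geodesic \emph{to $(0,\ell)$} sits at time $T$, and Property \ref{p:porder} as stated compares geodesics to endpoints on the \emph{same} line, so you cannot directly order $\pi[x_k\to(0,\ell)]$ against $\pi[x_k\to 0]$.

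The paper closes this gap by keeping $Y$ in play: it invokes Lemma \ref{l:useful} to find $Y\in\Q^+$ with $Z_\ell(x,Y)>0$, then applies Lemma \ref{l:infgeo} to the pair $(0,Y)$, obtaining a common intersection point $(T,d(T))$ of $\pi[x\to 0]$, $\pi[x\to Y]$, $\pi[x_k\to 0]$, $\pi[x_k\to Y]$. Because $T\le 0< Z_\ell(x,Y)$ one gets $d(T)>\ell$ for free, and — this is the crucial step you are missing — because $Z_\ell(x,0)\le 0< Z_\ell(x,Y)$, the rightmost geodesic from $x_k$ to $(0,\ell)$ is sandwiched between $\pi[x_k\to 0]$ and $\pi[x_k\to Y]$ by ordering of geodesics, and both of those pass through $(T,d(T))$ at a level above $\ell$; hence so does $\pi[x_k\to(0,\ell)]$. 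Without the $Y$ on the right side of the sandwich there is no such ordering argument, so the aside in your proposal (``actually for the statement at hand it is cleaner to argue as follows'') discards exactly what is needed. Restoring the $Y$-sandwich and replacing the ``push $T$ negative'' device with the observation $T\le 0<Z_\ell(x,Y)\Rightarrow d(T)>\ell$ would turn your sketch into the paper's proof.
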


\begin{proof}We prove that the above statement holds for a fixed $x\in \Q^+$ almost surely. Its extension to all $x\in \Q^+$ is standard and holds almost surely.
To this end, fix any $x\in \Q^+$. Using Lemma \ref{l:useful}, we get a random $Y\in \Q^+$ such that $Z_\l(x,Y)>0$ almost surely. Then Lemma \ref{l:infgeo} guarantees that there exist $(T,d(T))$ and $K$ such that almost surely for all $k\geq K$, the paths $\pi[x\to 0] ,\pi[x\to Y], \pi[x_k\to 0]$ and $\pi[x_k\to Y]$ intersect at $(T,d(T))$. Since $T\leq 0$, and $Z_\l(x,Y)>0$,
\[d(T)>\l\,.\]
Since
\[Z_\l(x,0)\leq 0<Z_\l(x,Y)\,,\]
by ordering of geodesics, for all $k\geq K$, $\pi[x_k\to (0,\l)]$ also passes through $(T,d(T))$. Thus for all $k\geq K$,
\[\A[x_k\to (0,\l)]-\A[x_k\to (0,1)]=\A[(T,d(T))\to (0,\l)]-\A[(T,d(T))\to (0,1)]\,.\]
Hence the lemma follows.
\end{proof}

The next lemma provides a bound for $\A[x\to (0,\l)]$.
\begin{lemma}\label{l:boundA}For all $\l\in \N$ and $x\in \Q^+$, almost surely
\[\A[x\to (0,\l)]\leq \S(x,0)\,.\]
\end{lemma}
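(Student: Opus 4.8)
The plan is to use the last-passage interpretation of $\A[x\to(0,\l)]$ together with the triangle inequality \eqref{e:triangleineq}. Recall from Theorem \ref{t:ok} that for a fixed $x\in\Q^+$ and all sufficiently large $k$,
\[
\A[x\to(0,\l)] = \A[x_k\to(0,\l)] - \A[x_k\to(0,1)] + \S(x,0),
\]
where $x_k=(-\sqrt{k/2x},k)$. So it suffices to show that for all large $k$ one has $\A[x_k\to(0,\l)]\le \A[x_k\to(0,1)]$. This is exactly the reverse triangle inequality: since $1\le \l$ and the path realizing the last passage value from $x_k$ to $(0,\l)$ can be concatenated with a trivial path (or, more precisely, since any path from $x_k$ to $(0,\l)$ is a path from $x_k$ to a point on line $\l$, which by \eqref{e:triangleineq} with the decomposition at $z=0$, $k=\l$ cannot beat the free optimization ending at line $1$), we get $\A[x_k\to(0,1)]\ge \A[x_k\to(0,\l)]$ directly from \eqref{e:triangleineq}: take $x\leftarrow -\sqrt{k/2x}$, $\l\leftarrow k$, $y\leftarrow 0$, $m\leftarrow 1$, $z\leftarrow 0$, $k\leftarrow \l$ in that display, noting $\A[(0,\l)\to(0,1)]\ge 0$ because the constant path has nonnegative length (it equals $\A_1(0)-\A_\l(0)-\sum g_i(0)\geq 0$ when all jumps occur at $0$; more simply, the last passage value between a point and a point directly above it along the lines is the sum of gap decrements, which is $\le \A_1(0)-\A_\l(0)$, but we only need $\A[x_k\to(0,\l)]+\A[(0,\l)\to(0,1)]\le \A[x_k\to(0,1)]$ and $\A[(0,\l)\to(0,1)]\ge 0$).

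Concretely: by \eqref{e:triangleineq} applied with endpoints $x_k$ and $(0,1)$, intermediate point $(0,\l)$,
\[
\A[x_k\to(0,1)] \ge \A[x_k\to(0,\l)] + \A[(0,\l)\to(0,1)].
\]
Since $\A[(0,\l)\to(0,1)]\ge \l(\pi_0)$ for the path $\pi_0$ on $[0,0]$ that jumps through all lines at the single point $0$, and that length equals $\A_1(0)-\A_\l(0)$ minus a sum of gap values $g_i(0)\ge 0$, but in fact one checks $\A[(0,\l)\to(0,1)] = \sum_{i=1}^{\l-1}(\A_i(0)-\A_i(0)) = \A_1(0)-\A_\l(0)\ge 0$ is too crude; the clean statement is simply that a degenerate path has length $0$ in the gap formulation when $m=\l$ but here $m=1<\l$, so instead note $\A[(0,\l)\to(0,1)]\ge 0$ because $\A_1(0)>\A_\l(0)$ and the geodesic length is nonnegative (it is bounded below by taking $t_i=0$ for all $i$, giving $\A_1(0)-\A_\l(0)>0$). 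Hence $\A[x_k\to(0,1)]\ge \A[x_k\to(0,\l)]$, and therefore
\[
\A[x\to(0,\l)] = \A[x_k\to(0,\l)] - \A[x_k\to(0,1)] + \S(x,0) \le \S(x,0).
\]
Taking a countable intersection over $x\in\Q^+$ gives the almost sure statement for all $x\in\Q^+$ simultaneously.

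I do not expect a genuine obstacle here; the only care needed is bookkeeping: confirming that $\A[(0,\l)\to(0,1)]\ge 0$ (immediate from the strict ordering $\A_1>\A_2>\dots$ of the Airy lines at the point $0$, which makes the length of the all-jumps-at-$0$ path positive), and confirming that the representation from Theorem \ref{t:ok} is valid for the chosen large $k$. Both are routine. The lemma is essentially a one-line consequence of the reverse triangle inequality \eqref{e:triangleineq} once the infinite-geodesic value $\A[x\to(0,\l)]$ is unpacked via Theorem \ref{t:ok}.
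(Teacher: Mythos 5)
Your proof is essentially the paper's: invoke Theorem \ref{t:ok} to write $\A[x\to(0,\l)]-\S(x,0)$ as $\A[x_k\to(0,\l)]-\A[x_k\to(0,1)]$ for large $k$, then observe this is $\le 0$ by the reverse triangle inequality \eqref{e:triangleineq}, which is exactly the argument in the paper. One small correction in your bookkeeping: the path from $(0,\l)$ to $(0,1)$ with all jumps at $0$ has length $\A_1(0)-\A_\l(0)-\sum_{i=1}^{\l-1}g_i(0)=0$ by the gap formula \eqref{e:gap}, not $\A_1(0)-\A_\l(0)>0$ as you assert near the end; the value $\A[(0,\l)\to(0,1)]$ is exactly $0$, which still gives the desired nonnegativity and hence $\A[x_k\to(0,1)]\ge\A[x_k\to(0,\l)]$.
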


\begin{proof}
By Theorem \ref{t:ok}, for any $x\in \Q^+$, $\A[x\to(0,\l)]-\S(x,0)$ is defined as the almost surely limit as $k\to \infty$ of
\[\A[x_k\to (0,\l)]-\A[x_k\to (0,1)]\,. \]
Now by triangle inequality \eqref{e:triangleineq}, we have for every $k,\l\in \N$ with $k\geq \l$,
\[\A[x_k\to (0,1)]\geq \A[x_k\to (0,\l)]\,.\]
Thus $A[x\to(0,\l)]-\S(x,0)$ is an almost sure limit of nonpositive random variables, hence
\[\A[x\to(0,\l)]\leq \S(x,0)\]
almost surely.
\end{proof}

Next let $\mathcal F_-$ denote the $\sigma$-field generated by the negative time values of $\mathcal A$, that is
\[\mathcal F_-=\sigma\{\A_i(x): x\leq 0, i=1,2,\ldots\}\,.\]
Then we have the following lemma.
\begin{lemma}\label{insigma}
 For all $\l\in \N$ and $x\in \Q^+$,
 $$\A[x\to (0,\l)]\in \mathcal F_-\,.$$
\end{lemma}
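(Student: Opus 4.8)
The plan is to reduce everything, via Theorem~\ref{t:ok}, to the elementary fact that a last passage value in $\A$ between two points whose spatial coordinates are both $\le 0$ is $\mathcal F_-$-measurable. Concretely, by Theorem~\ref{t:ok} there is, almost surely, a random $K'_{x,\l}$ such that for all $k\ge K'_{x,\l}$
\[
\A[x\to(0,\l)]-\S(x,0)=\A[x_k\to(0,\l)]-\A[x_k\to(0,1)],
\]
where $x_k=(-\sqrt{k/2x},k)$. For fixed $k$, every path realising $\A[x_k\to(0,j)]$ (with $j=1$ or $j=\l$) is a non-increasing cadlag function $\pi\colon[-\sqrt{k/2x},0]\to\{1,\dots,k\}$, and its length is, by definition, a function of the numbers $\A_i(t)$ with $t\in[-\sqrt{k/2x},0]\subseteq(-\infty,0]$ and $i\le k$; hence $\A[x_k\to(0,j)]\in\mathcal F_-$. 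Thus $D_k:=\A[x_k\to(0,\l)]-\A[x_k\to(0,1)]\in\mathcal F_-$ for every $k$, and since by Theorem~\ref{t:ok} the sequence $(D_k)$ is eventually constant almost surely, its limit $\A[x\to(0,\l)]-\S(x,0)$ is $\mathcal F_-$-measurable (working with the $\P$-completion of $\mathcal F_-$, into which any almost sure limit of $\mathcal F_-$-measurable variables falls).

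It then remains to show $\S(x,0)\in\mathcal F_-$ for every $x\in\Q^+$. Here I would invoke the construction of the Airy sheet in \citeDOV: $\S$ is a deterministic measurable functional of the parabolic Airy line ensemble $\A$, obtained as a scaling limit of last passage percolation in the Brownian melon, and from that construction the restriction $y\mapsto\S(x,y)$ to $(-\infty,0]$ (for $x\in\Q^+$) is a function of $\{\A_i(t):t\le 0\}$ alone. Indeed, after the rescaling of Theorem~\ref{t:conv} the last passage path defining $\S(x,y)$ runs from a source whose spatial coordinate tends to $-\infty$ to an endpoint with spatial coordinate $y\le 0$, and so stays at spatial coordinates $\le 0$; the only non-$\A$ input is a deterministic centering. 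Passing to the limit yields $\S(x,y)\in\mathcal F_-$ for all $y\le 0$, in particular $\S(x,0)=\A[x\to(0,1)]\in\mathcal F_-$. Combining this with the previous paragraph gives $\A[x\to(0,\l)]=\S(x,0)+\bigl(\A[x\to(0,\l)]-\S(x,0)\bigr)\in\mathcal F_-$, and since $\Q^+$ is countable this holds simultaneously for all $x\in\Q^+$ almost surely.

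I expect the last step — membership of $\S(x,0)$ itself in $\mathcal F_-$ — to be the only real point of the proof. The tools quoted in the excerpt, namely \eqref{e:defS} and \eqref{e:wow}, give the increments $\S(x,z)-\S(x,y)$ for $y,z\le 0$ as (eventually constant) differences of last passage values at spatial coordinates $\le 0$, hence as $\mathcal F_-$-measurable quantities, but they pin $\S(x,\cdot)$ down only up to an additive constant; fixing that constant is exactly what forces one to go back to the melon construction in \citeDOV\ (equivalently, to the fact that $\S(x,y)$ is a limit of centered last-passage values $\A[x_k\to(y,1)]-c_k$ with $c_k$ deterministic). Everything else is routine measurability bookkeeping.
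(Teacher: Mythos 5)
Your Step 2 treatment of $\A[x\to(0,\l)]-\S(x,0)$ is correct and matches the paper: Theorem~\ref{t:ok} gives this difference as an eventually constant $\F_-$-measurable sequence, so it is $\F_-$-measurable. The gap is in Step 1, the claim $\S(x,0)\in\F_-$, which you yourself identify as ``the only real point of the proof'' --- and the justification you offer does not hold up.

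Specifically, you write that ``$\S(x,y)$ is a limit of centered last-passage values $\A[x_k\to(y,1)]-c_k$ with $c_k$ deterministic.'' This is false. The coupling \eqref{e:defS} only fixes increments: $\A[x_k\to(z,1)]-\A[x_k\to(y,1)]$ is eventually equal to $\S(x,z)-\S(x,y)$. The value $\A[x_k\to(y,1)]$ itself, even after deterministic recentering, does \emph{not} converge almost surely: the source $x_k=(-\sqrt{k/2x},k)$ recedes to $(-\infty,\infty)$ and the last passage value accumulates order-one fluctuations along the way, so there is no deterministic $c_k$ with $\A[x_k\to(y,1)]-c_k\to\S(x,y)$. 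Relatedly, the appeal to ``the melon construction'' conflates two distinct statements: it is true (and proved in \citeDOV) that $\S$ is a.s.\ a measurable function of $\A$, but that coupling is produced via a Skorohod representation of a joint scaling limit, and there is no automatic transfer of the prelimit observation ``paths to $(\hat y,1)$ with $y\le 0$ live at spatial coordinates $\le 0$'' into the conclusion that the limiting functional $\S(x,0)$ is $\F_-$-measurable. In short, you have correctly located where the additive constant must come from but have not supplied a valid mechanism to pin it down.

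The paper closes this gap by a different device entirely (and this is the heart of the lemma): translation invariance gives $\S(x,\cdot)\overset{d}{=}\A_1(\cdot-x)$, so $z\mapsto\S(x,z)+(z-x)^2$ is stationary; ergodicity of the stationary Airy process then yields, almost surely,
\begin{equation*}
\S(x,0)=\lim_{m\to\infty}\frac1m\int_{-m}^{0}\bigl((\S(x,0)-\S(x,z))-(z-x)^2\bigr)\,dz+\E\A_1(0)\,,
\end{equation*}
and every ingredient on the right --- the increments $\S(x,0)-\S(x,z)$ for $z\le 0$, by \eqref{e:defS} --- is $\F_-$-measurable. That is what recovers the additive constant from increment data alone. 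You should replace your Step 2 appeal to the melon construction with this ergodic averaging argument (or an equivalent one); as written, the key step is not justified.
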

\begin{proof}The proof follows in two steps.

Step 1. $\mathcal S(x,0)\in \mathcal F_-$.

Using translation invariance ($\S(\cdot+t,\cdot+t)$ has the same law as $\S$ for all $t\in \R$) we have
\[\S(x,\cdot)\overset{d}{=}\S(0,\cdot-x)=\A_1(\cdot-x)\,.\]
 Since $\A_1(z)+z^2$ is stationary and ergodic (see equation $(5.15)$ in \cite{prahofer2002scale}) for any fixed $x>0$ we have almost surely,
\begin{eqnarray}\label{e:sx0}
\S(x,0)&=&\lim_{m\to \infty} \frac{1}{m}\int_{-m}^0(\S(x,0)-(\S(x,z)+(z-x)^2))dz+\E\A_1(0)\nonumber\\
&=&\lim_{m\to \infty} \frac{1}{m}\int_{-m}^0((\S(x,0)-\S(x,z))-(z-x)^2)dz+\E\A_1(0)
\end{eqnarray}
Now for any $z<0$, $\S(x,0)-\S(x,z)\in \mathcal F_-$ because it is defined as the almost sure limit
\[\lim_{k\to \infty}(\A[x_k\to (0,1)]-\A[x_k\to(z,1)])\,,\]
where $x_k=(-\sqrt{k/(2x)},k)$, and clearly $\A[x_k\to (0,1)]-\A[x_k\to(z,1)]\in \F_-$ for any $z<0$. Thus $\S(x,0)\in \mathcal F_-$ from \eqref{e:sx0}.

Step 2. $\mathcal A[x\to(0,\l)]-\S(x,0)\in  \mathcal F_-$.

This is because by Theorem \ref{t:ok}, $\mathcal A[x\to(0,\l)]-\S(x,0)$ is obtained as an almost sure limit
$$
\lim_{k\to\infty}(\mathcal A[x_k\to(0,\l)]-\mathcal A[x_k\to(0,1)])
$$
with $x_k=(-\sqrt{k/(2x)},k)$, and clearly $\A[x_k\to(0,\l)]-\mathcal A[x_k\to(0,1)]\in \mathcal F_ -$.

From these two steps we have $\A[x\to (0,\l)]\in \F_-$.
\end{proof}

Finally we have the following lemma.

\begin{lemma}\label{l:decomp}Let $x_0>1$ and $y_0>1$. Let
\[L_0=\pi[x_0'\to y_0'](0)\,,\]
for some $x_0',y_0'\in \Q$ with $x_0'\geq x_0$ and $y_0'\geq y_0$. Then almost surely for all $x\in [1,x_0]\cap \Q$ and all $y\in[1,y_0]$,
\[\S(x,y)=\max_{\l\le L_0}(\A[x\to(0,\l)]+\A[(0,\l)\to (y,1)])\,.\]
\end{lemma}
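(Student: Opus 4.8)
The plan is to express $\S(x,y)$ as a last passage value across the Airy line ensemble, localize the relevant geodesics to negative times so that they can be rerouted through the line $\{(0,\l):\l\in\N\}$, and then bound the number of lines that actually need to be considered using the point $L_0$.

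\textbf{Step 1: write $\S(x,y)$ via last passage in $\A$.} Fix $x\in[1,x_0]\cap\Q$ and $y\in[1,y_0]$. Using the coupling of the Airy sheet \eqref{e:defS}, the identity \eqref{e:wow}, and the definitions of $\A[x\to(0,\l)]$ and $\A[(0,\l)\to(y,1)]$, I would show that, with $(T,d(T))$ the common intersection point of $\pi[x\to 0]$ and $\pi[x\to y]$ provided by Lemma \ref{l:infgeo}, one has
\[\S(x,y)-\S(x,0)=\A[(T,d(T))\to(y,1)]-\A[(T,d(T))\to(0,1)]\,.\]
Breaking the geodesic from $(T,d(T))$ to $(y,1)$ at the time $0$, it passes through some line $Q:=\pi[x\to y](0)\in\N$ at time $0$, so the metric composition law gives $\A[(T,d(T))\to(y,1)]=\A[(T,d(T))\to(0,Q)]+\A[(0,Q)\to(y,1)]$; since $Q\le d(T)$ and the restriction of the geodesic to $[T,0]$ is a geodesic between $(T,d(T))$ and $(0,Q)$, combining with the analogous decomposition of $\A[(T,d(T))\to(0,1)]$ and the definition of $\A[x\to(0,\l)]$ yields $\S(x,y)=\A[x\to(0,Q)]+\A[(0,Q)\to(y,1)]$. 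Conversely, for every $\l$ the (reverse) triangle inequality \eqref{e:triangleineq} applied to a path that first goes from $x$ down to $(0,\l)$ and then from $(0,\l)$ to $(y,1)$, together with Lemma \ref{l:boundA} and the definition of $\A[x\to(0,\l)]$, gives $\A[x\to(0,\l)]+\A[(0,\l)\to(y,1)]\le\S(x,y)$. Hence the displayed maximum equals $\S(x,y)$ once the index set is taken large enough to contain the optimal $Q$; the content of the lemma is that $L_0$ is such a uniform bound.

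\textbf{Step 2: control $Q$ by $L_0$ uniformly in $x,y$.} The key point is that $Q=\pi[x\to y](0)\le L_0=\pi[x_0'\to y_0'](0)$ for all $x\le x_0\le x_0'$ and $y\le y_0\le y_0'$. This is exactly the ordering of infinite geodesics, Lemma \ref{l:infporder}: since $x\le x_0'$ and $y\le y_0'$, the geodesic $\pi[x\to y]$ lies weakly below $\pi[x_0'\to y_0']$ at every time $s\le y$, in particular at $s=0$ (note $0\le 1\le y$). Thus $Q\le L_0$, so the optimal index in Step 1 lies in $\{1,\dots,L_0\}$, and restricting the maximum to $\l\le L_0$ does not lose the optimum while, by the converse inequality in Step 1, it also never overshoots. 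Combining, $\S(x,y)=\max_{\l\le L_0}(\A[x\to(0,\l)]+\A[(0,\l)\to(y,1)])$ for this fixed $(x,y)$, and since there are only countably many rational $x$ and the statement for each fixed $y$ extends to all real $y\in[1,y_0]$ by continuity of both sides in $y$ (the right side is a finite maximum of continuous functions), it holds almost surely simultaneously for all $x\in[1,x_0]\cap\Q$ and all $y\in[1,y_0]$.

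\textbf{Main obstacle.} The delicate part is Step 1: making rigorous the rerouting of the (only almost-surely-existing, infinite) geodesic $\pi[x\to y]$ through the line $\{(0,\l)\}$ and matching it with the somewhat indirect definitions of $\A[x\to(0,\l)]$ (Theorem \ref{t:ok}) and of the length of $\pi[x\to y]$ as $\S(x,y)$ (Definition \ref{d:infgeo}). One must check that the finite-$k$ geodesics $\pi[x_k\to y]$ and $\pi[x_k\to 0]$ all pass through the same point $(T,d(T))$ for large $k$ (Lemma \ref{l:infgeo}) and that $d(T)\ge \l$ for the relevant $\l\le L_0$, so that the finite-$k$ decompositions through $(0,\l)$ are legitimate and pass to the limit; the upper bound $\A[x\to(0,\l)]+\A[(0,\l)\to(y,1)]\le\S(x,y)$ then follows from the triangle inequality at finite $k$ combined with Lemma \ref{l:boundA}. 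Everything else is a bookkeeping combination of the metric composition law, the ordering of geodesics, and a countable-intersection/continuity argument to upgrade from fixed $(x,y)$ to all $(x,y)$.
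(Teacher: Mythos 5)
Your proof is correct and follows essentially the same approach as the paper: both bound the relevant line index at time $0$ by $L_0$ using the ordering of geodesics, then split the last passage value at time $0$ via the metric composition law and invoke Theorem~\ref{t:ok} to pass from the finite-$k$ quantities to $\A[x\to(0,\l)]$. The only implementation difference is that the paper works throughout with the finite-$k$ rightmost geodesics $\pi[x_k\to y]$, which are well-defined for all real $y$ and thus avoid your extra continuity step, whereas you invoke the infinite geodesic $\pi[x\to y]$ that exists only for rational $y$ and then extend by continuity.
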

\begin{proof}
By Lemma \ref{l:infporder}, almost surely for all $x\in [1,x_0]\cap \Q$,
\[\pi[x\to y_0'](0)\leq \pi[x_0'\to y_0'](0)=L_0\,.\]
Hence, by Definition \ref{d:infgeo} and Lemma \ref{l:infgeo}, almost surely for all $x\in [1,x_0]\cap \Q$ and all $k$ large enough (depending on $x$),
\[\pi[x_k\to y_0'](0)\leq L_0\,.\]
Thus by ordering of geodesics (Property \ref{p:porder}), almost surely for all $x\in [1,x_0]\cap \Q$ and all $k$ large enough (depending on $x$) and all $y\in [1,y_0]$,
\[1\leq \pi[x_k\to y](0)\leq \pi[x_k\to y_0'](0)\leq L_0\,.\]
Hence almost surely for all $x\in [1,x_0]\cap \Q$ and all $k$ large enough, depending on $x$ and all $y\in [1,y_0]$, by \eqref{e:wow} and metric composition law (Property \ref{p:metrcomp}), we have
\begin{eqnarray*}
\S(x,y)-\S(x,0)&=&\A[x_k\to(y,1)]-\A[x_k\to(0,1)]\\
&=&\sup_{\l\in \N}(\A[x_k\to(0,\l)]+\A[(0,\l)\to (y,1)])-\A[x_k\to(0,1)]
\end{eqnarray*}
Since $\pi[x_k\to y](0)\le L_0$, we have
\begin{eqnarray*}
\S(x,y)-\S(x,0)&=&\max_{\l\le L_0}(\A[x_k\to(0,\l)]+\A[(0,\l)\to (y,1)])-\A[x_k\to(0,1)]\\
&=&\max_{\l\le L_0}(\A[x_k\to(0,\l)]-\A[x_k\to(0,1)]+\A[(0,\l)\to (y,1)])\,.
\end{eqnarray*}
Thus almost surely for all $x\in [1,x_0]\cap \Q$, \sourav{taking} $k\ge \sup_{\l\le L_0}K'_{x,\l}$ where $K'_{x,\l}$ is as in Theorem \ref{t:ok}, we have by Theorem \ref{t:ok}, for all $y\in [1,y_0]$,
\begin{eqnarray*}
\S(x,y)-\S(x,0)&=&\max_{\l\le L_0}(\A[x\to(0,\l)]-\S(x,0)+\A[(0,\l)\to (y,1)])\\
&=&\max_{\l\le L_0}(\A[x\to(0,\l)]+\A[(0,\l)\to (y,1)])-\S(x,0)\,.\qedhere
\end{eqnarray*}
\end{proof}

\section{Absolute continuity of Brownian last passage percolation}\label{absbrow}
In this section, we show the absolute continuity with respect to Brownian motion of a certain Brownian last passage percolation. First we shall need the following lemma about the absolute continuity of the top line of the Pitman transform of two independent Brownian motions. Throughout this and the next section,
\[\mu\ll\nu\]
for two measures $\mu$ and $\nu$ will mean $\mu$ is absolutely continuous with respect to $\nu$.
\begin{lemma}\label{l:case2}Let $0<L<R$ and $b_1,b_2\in \R$. Let $B_1,B_2$ be two independent Brownian motions on $[0,\infty)$ starting from $(0,b_1)$ and $(0,b_2)$ respectively. Let $\mathrm{W}B_1(\cdot)$ denote the top line of the Pitman transform \eqref{e:pitmantrans} of $B_1$ with respect to $B_2$. Then the law of $\mathrm{W}B_1(t)$ restricted to $[L,R]$ is absolutely continuous with respect to that of a standard Brownian motion starting from $(0,0)$ restricted to $[L,R]$.
\end{lemma}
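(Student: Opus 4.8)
The plan is to reduce the statement to the well-understood non-colliding picture and then to a Cameron–Martin/Girsanov comparison. Recall from the discussion around \eqref{e:ptr} that $\W B_1(t) = \max_{i=1,2}\{B_i(0) + B[(0,i)\to(t,1)]\}$, so $\W B_1$ is exactly the top line of a two-line Brownian last passage percolation with the two walks started from heights $b_1$ and $b_2$. The first step is to handle the case $b_1 = b_2 = 0$: then $\W B_1(t) = B[(0,2)\to(t,1)]$ is the melon, and by Pitman's $2M-B$ theorem $\W B$ is distributed as two Brownian motions started at the origin Doob-conditioned never to collide, so $\W B_1$ is the top line of that ensemble. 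One then needs: (i) that this top line, restricted to $[L,R]$ with $L>0$, is absolutely continuous with respect to Brownian motion on $[L,R]$. This is a Brownian Gibbs / $h$-transform statement of exactly the type available in the literature (e.g. the two-line analogue of the Brownian Gibbs property, or a direct Radon–Nikodym computation for the $h$-transform of a pair of Brownian bridges); since $L>0$ the conditioning event ``never collide on $[0,\infty)$'' has, conditionally on the endpoints $\W B(L)$ and on the future, positive probability and produces a density bounded in terms of the gap $\W B_1(L)-\W B_2(L)$, which is a.s.\ positive. Crucially, absolute continuity on $[L,R]$ only asks about the law of $t\mapsto \W B_1(t) - \W B_1(L)$, so the starting point is irrelevant and we may work on $[L,R]$.

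The second step is to remove the assumption $b_1=b_2=0$. Write $B_i(t) = b_i + \tilde B_i(t)$ with $\tilde B_1,\tilde B_2$ standard from the origin. The cleanest route is to absorb the shift $b_1 - b_2$ into a change of measure: on a finite interval $[0,R]$, the law of $(B_1,B_2)$ with drifts/shifts as above is mutually absolutely continuous with the law of $(\tilde B_1, \tilde B_2+c)$ for suitable constant $c$, or one can use the translation $f_i \mapsto f_i + h$ invariance of geodesics noted after \eqref{e:gap}: subtracting a common function does not change which path is optimal, but the max in \eqref{e:ptr} does feel the difference $b_1 - b_2$. So instead I would argue directly: condition on the pair $(B_1(0),B_2(0))$ being at $(b_1,b_2)$ is already the setup; compare to the $b_1=b_2$ case by noting that the Pitman transform is a continuous functional and that starting a pair of Brownian motions from $(b_1,b_2)$ versus $(0,0)$ differ only through an additive shift of $B_2 - B_1$ at time $0$, which can be realized by a Girsanov tilt on $[0,L]$ (an $L^2$-bounded change of measure since we only move a bounded distance over a fixed time). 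Under that tilt the conditioned-never-to-collide law from $(b_1,b_2)$ is absolutely continuous with respect to that from $(0,0)$, on the interval $[L,R]$; combined with Step 1 this gives the claim. An alternative, perhaps simpler, is to push the starting points to $0$ by first running to time $L$ and invoking the Markov property: $\W B_1$ on $[L,R]$ is a measurable function of $\W B_1(L), \W B_2(L)$ and the increments of $B_1,B_2$ on $[L,R]$, and for any fixed pair of real values $a_1 > a_2$ the resulting law on $\C_0[L,R]$ is the Pitman transform of two independent Brownian motions started at $(a_1,a_2)$, whose top line we already understand; then integrate over the (absolutely continuous, a.s.\ non-degenerate) law of $(\W B_1(L), \W B_2(L))$.

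The main obstacle is Step 1, i.e.\ establishing that the top line of the two-line non-colliding Brownian ensemble (started at a point, so not stationary and with the singular conditioning at $t=0$) is Brownian-absolutely-continuous on an interval $[L,R]$ bounded away from $0$. The point $t=0$ is where the two lines touch, so one cannot include $0$; away from it the argument is a Radon–Nikodym computation. Concretely I would: condition on $\W B_1(L), \W B_2(L), \W B_1(R), \W B_2(R)$ and on the trajectory of $\W B_2$ on $[L,R]$; then $\W B_1$ on $[L,R]$ is a Brownian bridge conditioned to stay above the given lower curve $\W B_2$, whose law is absolutely continuous with respect to the free Brownian bridge with a density equal to the reciprocal of the (positive) non-collision probability — standard for Brownian bridges above a continuous curve. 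Unconditioning and using that the bridge-to-motion comparison on a compact interval is itself an absolute continuity, and that $\W B_1(R)$ has a density, yields absolute continuity with respect to Brownian motion. The one technical care needed is uniform integrability of the resulting Radon–Nikodym derivative, which follows because the non-collision probability is bounded below in terms of the a.s.-positive gaps $\W B_1(L) - \W B_2(L)$ and $\W B_1(R) - \W B_2(R)$, and these gaps have no atom at $0$.
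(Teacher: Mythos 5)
Your high-level plan — run to a positive time $L$, use a Markov property to reduce to the case $b_1=b_2=0$, then observe that the top line of the Brownian $2$-melon is a non-colliding pair whose law on $[L,R]$ is absolutely continuous with respect to free Brownian motion — is exactly the structure of the paper's proof. However, your "alternative, perhaps simpler" route for the reduction step contains a genuine error in the choice of Markov state, and your first route (the Girsanov tilt) is not correctly formulated either.

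Concretely, you assert that $\W B_1$ on $[L,R]$ is a measurable function of $\bigl(\W B_1(L), \W B_2(L)\bigr)$ and the increments of $B_1,B_2$ after time $L$, and that given $\W B_1(L)=a_1, \W B_2(L)=a_2$ the restarted process is the Pitman transform of fresh Brownian motions started from $(a_1,a_2)$. This is false. Writing $G=G(B_1,B_2)(0,L)$ and $\beta_i(s)=B_i(L+s)-B_i(L)$, one computes
\[
\W B_1(L+s)=\W B_1(L)+\beta_1(s)+\max\Bigl\{0,\ \max_{u\le s}\bigl(B_2(L)-\W B_1(L)+\beta_2(u)-\beta_1(u)\bigr)\Bigr\},
\]
which depends on $B_2(L)$, not on $\W B_2(L)=B_2(L)-G$. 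Equivalently, $\W B_1(L+s)$ depends on $G$ in addition to $(\W B_1(L),\W B_2(L))$; since $G$ is a path functional not determined by the state at time $L$, the pair $(\W B_1,\W B_2)$ is not a Markov process for $b_1\neq b_2$. (Intuitively, $\W B_1-\W B_2$ behaves as a Brownian motion from $b_1-b_2$ until hitting $0$ and thereafter as a Bessel$(3)$ process, and the present value alone does not reveal which phase you are in.) The correct Markov pair, and the one the paper uses, is $(\W B_1,B_2)$: here $\W B_1-B_2$ is the Skorohod reflection of $B_1-B_2$, hence $(\W B_1-B_2,B_2)$ is Markov, and the displayed identity shows that the restarted process is exactly the Pitman transform of two independent Brownian motions started from $\bigl(\W B_1(L),B_2(L)\bigr)$. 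With that correction, your "integrate over the law at time $L$" argument becomes the paper's argument: the law of $\bigl(\W B_1(L),B_2(L)\bigr)$ for any starting pair is mutually absolutely continuous with respect to Lebesgue measure on $\{x\ge y\}$, hence with respect to the law from $(0,0)$, and the Markov property transfers this to absolute continuity of the restarted path laws.

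The Girsanov route you sketch first does not work as stated: a Girsanov change of drift does not change an initial position, and the laws of Brownian motion from $b$ versus $0$ on $[0,T]$ are mutually singular (they disagree at $t=0$). What one can say is that these laws become mutually absolutely continuous once restricted to $[L,T]$ with $L>0$, but that is precisely the Markov-at-time-$L$ argument, not a Girsanov tilt. Finally, your Step 1 (the $b_1=b_2=0$ case) is more elaborate than necessary — conditioning on endpoints and the trajectory of the lower line and invoking Brownian bridges above a curve — whereas the paper notes directly that the $2$-melon is a Doob $h$-transform, so its law on $[L,R]$ is absolutely continuous with respect to two free Brownian motions, from which the top-line statement follows by taking marginals; your argument is morally fine but not needed in that generality.
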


\begin{proof}It follows from the the formula for the Pitman transform that $(\mathrm{W}B_1(t), B_2(t))$ is a Markov process in $t$. Let $\mu_{b_1,b_2}=(\mu_{b_1,b_2}^{(1)},\mu_{b_1,b_2}^{(2)})$ be the law of $(\mathrm{W}B_1(\cdot), B_2(\cdot))$ restricted to $[L,\infty)$ and $\nu_{b_1,b_2}$ be the distribution of $(\mathrm{W}B_1(L),B_2(L))$, where $B_1, B_2$ start from $(0,b_1)$ and $(0,b_2)$ respectively. First observe that
\begin{equation}\label{e:absco}
\nu_{b_1,b_2}\ll \nu_{0,0}\,,
\end{equation}
since both are mutually absolutely continuous with respect to the product Lebesgue measure on the half-plane $\{(x,y)\in \R^2: x\ge y\}$.
Since $(\mathrm{W}B_1(t), B_2(t))$ is a Markov process, we have
\begin{equation}\label{e:abc}
\frac{d\mu_{b_1,b_2}}{d\mu_{0,0}}=\frac{d\nu_{b_1,b_2}}{d\nu_{0,0}} \qquad \mbox{ and }
\qquad \mu_{b_1,b_2}\ll \mu_{0,0}\,.
\end{equation}
Now $\mu_{0,0}^{(1)}$ is the law of the top line of a usual Brownian $2$-melon restricted to $[L,\infty)$, and the Brownian $2$-melon has the law of two independent standard Brownian motions conditioned not to intersect. Restricted to $[L,R]$, its law is absolutely continuous with respect to that of two independent Brownian motions. Thus its top line is absolutely continuous with respect to a standard Brownian motion restricted to $[L,R]$. This, together with \eqref{e:abc}, proves the lemma.
\end{proof}

We also have the following corollary of the above lemma.
\begin{corollary}\label{l:corcase2}Let $0<L<R$ and $b\in \R$. Let $B_1,B_2$ be two independent continuous processes on $[0,\infty)$ such that $B_1$ is a standard Brownian motion starting from $(0,b)$ and the law of $B_2$ restricted to $[\frac{L}{2},R]$ is absolutely continuous with respect to the law of a standard Brownian motion starting from $(0,0)$ restricted to $[\frac{L}{2},R]$. Let $\mathrm{W}B_1(\cdot)$ denote the top line of the Pitman transform of $B_1$ with respect to $B_2$, see \eqref{e:pitmantrans}. Then the law of $\mathrm{W}B_1(t)$ restricted to $[L,R]$ is absolutely continuous with respect to that of a standard Brownian motion starting from $(0,0)$ restricted to $[L,R]$.
\end{corollary}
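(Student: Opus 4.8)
The plan is to condition on the behaviour on $[0,L/2]$, use the Markov property of $B_1$ to rewrite $\W B_1$ on $[L/2,\infty)$ as a Pitman transform with time origin $L/2$, and then reduce to Lemma~\ref{l:case2}. First I would record the identity behind the reduction. With $G(0,t):=G(B_1,B_2)(0,t)$ the running maximal gap, $\W B_1(t)=B_1(t)+G(0,t)$; since $G(0,t)=\max\{G(0,L/2),\max_{s\in[L/2,t]}(B_2(s)-B_1(s))\}$ for $t\ge L/2$ and $G(0,L/2)\ge\max\{0,B_2(L/2)-B_1(L/2)\}$, we get
\[
\W B_1(t)=B_1(t)+\max\Big\{G(0,L/2),\ \max_{s\in[L/2,t]}\big(B_2(s)-B_1(s)\big)\Big\},\qquad t\ge L/2 .
\]
Equivalently, letting $\hat B_2$ be the process equal to $\W B_1(L/2)=B_1(L/2)+G(0,L/2)$ at time $L/2$ and to $B_2$ on $(L/2,\infty)$, the function $\W B_1$ on $[L/2,\infty)$ is exactly the top line of the Pitman transform \eqref{e:pitmantrans} of $B_1$ with respect to $\hat B_2$ when the time origin is moved from $0$ to $L/2$. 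In particular $B_2|_{[0,L/2]}$ enters $\W B_1|_{[L,R]}$ only through the single nonnegative scalar $h:=G(0,L/2)$.

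Now condition on $\mathcal G:=\sigma\{B_1(s),B_2(s):s\le L/2\}$. Conditionally on $\mathcal G$ the numbers $B_1(L/2)$ and $h\ge0$ are deterministic; by the Markov property of Brownian motion and the independence of $B_1$ and $B_2$, $B_1|_{[L/2,\infty)}$ is a Brownian motion from $(L/2,B_1(L/2))$ independent of $B_2|_{[L/2,\infty)}$, and the conditional law of $B_2|_{[L/2,R]}$ is still absolutely continuous with respect to that of a Brownian motion from $(L/2,B_2(L/2))$ on $[L/2,R]$. Conditionally on $\mathcal G$ the object $\W B_1|_{[L,R]}$ is a fixed measurable function of the two independent pieces $B_1|_{[L/2,R]}$ and $B_2|_{[L/2,R]}$, so $B_2|_{[L/2,R]}$ may be replaced by the genuine Brownian motion dominating it without affecting any absolute-continuity conclusion. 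It thus suffices to prove: if $B_1,B_2$ are independent Brownian motions on $[L/2,\infty)$ from arbitrary starting points and $h\ge0$ with $h\ge B_2(L/2)-B_1(L/2)$, then $t\mapsto B_1(t)+\max\{h,\max_{s\in[L/2,t]}(B_2(s)-B_1(s))\}$ restricted to $[L,R]$ is absolutely continuous with respect to a standard Brownian motion restricted to $[L,R]$.

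This is Lemma~\ref{l:case2} with the time origin at $L/2$ (legitimate because $0<L/2<L<R$) together with the bookkeeping of the head start $h$. The head start only changes the initial condition: for $t>L/2$ the pair $\big(B_1(t)+\max\{h,\max_{[L/2,t]}(B_2-B_1)\},\,B_2(t)\big)$ is Markov, started at $(B_1(L/2)+h,\,B_2(L/2))$ with first coordinate $\ge$ second. Since the reflection has then run for the positive time $L-L/2$, the law of this pair at time $L$ has a density on $\{(x,y):x\ge y\}$ for every $h$ and every starting point --- the same smoothing that underlies \eqref{e:absco} in the proof of Lemma~\ref{l:case2}. From time $L$ on the argument of Lemma~\ref{l:case2} applies verbatim: the pair is Markov with second coordinate a Brownian motion, so the law of the whole path on $[L,\infty)$ is absolutely continuous with respect to its $h=0$, centred counterpart, whose top line on $[L,R]$ is the top line of a Brownian $2$-melon and hence absolutely continuous with respect to a standard Brownian motion on $[L,R]$. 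Integrating over $\mathcal G$ proves the corollary.

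The step I expect to be the main obstacle is the one flagged above: $\W B_1|_{[L,R]}$ feels $B_2$ through its whole running maximum on $[0,R]$, not merely through $B_2|_{[L/2,R]}$, so one has to argue that this dependence is carried entirely by the scalar $h=G(0,L/2)$ and that conditioning on $[0,L/2]$ preserves the absolute continuity of $B_2$ on $[L/2,R]$ (and hence the applicability of Lemma~\ref{l:case2}). The extra half-interval $[L/2,L]$ --- in place of starting at $L$ --- is precisely the room needed for the head start to wash out before time $L$.
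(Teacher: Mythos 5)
Your reduction has the right shape: you correctly observe that $B_2|_{[0,L/2]}$ influences $\W B_1|_{[L,R]}$ only through the scalar $h=G(0,L/2)$, and you correctly observe that $h$ is absorbed into the starting point of $B_1$, so that after time $L/2$ one is looking at the Pitman transform with time-origin $L/2$ of a Brownian motion started at $B_1(L/2)+h$ against $B_2|_{[L/2,\cdot]}$. This means Lemma~\ref{l:case2} with arbitrary start points is exactly what you want to apply, and that part is fine.

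The gap is the step you yourself flagged and then waved through: ``the conditional law of $B_2|_{[L/2,R]}$ [given $\mathcal G$] is still absolutely continuous with respect to that of a Brownian motion.'' This does not follow from the hypothesis. The hypothesis is a statement about the \emph{marginal} law of $B_2|_{[L/2,R]}$; conditioning on $\sigma\{B_2(s):s\le L/2\}$ can destroy absolute continuity entirely. Concretely, let $V$ be a Brownian motion on $[L/2,R]$ (so $B_2|_{[L/2,R]}:=V$ satisfies the hypothesis with Radon--Nikodym derivative $1$), and let $\Psi$ be a measure isomorphism from $(\C[L/2,R],\mathrm{BM\ law})$ to $(\C[0,L/2],\mathrm{BM\ law})$ with $\Psi(V)(L/2)=V(L/2)$ (such a $\Psi$ exists: match the endpoint value, and send the independent post-$L/2$ increments to the independent pre-$L/2$ bridge). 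Set $B_2|_{[0,L/2]}:=\Psi(V)$. Then $B_2$ is continuous, $B_2|_{[L/2,R]}$ is exactly Brownian, yet conditionally on $\mathcal G$ the path $B_2|_{[L/2,R]}=\Psi^{-1}(B_2|_{[0,L/2]})$ is deterministic, i.e.\ the conditional law is a Dirac mass, which is singular with respect to Wiener measure. So the reduction to Lemma~\ref{l:case2} breaks down at exactly the point where you invoked it, and for a reason that cannot be fixed by replacing $L/2$ by some other cutoff.

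The paper avoids this by \emph{not} conditioning on the whole past of $B_2$. Instead it conditions only on the pair $(B_2(2L/3),\,B_2(2L/3+\cdot)-B_2(2L/3))$, which is a deterministic function of $B_2|_{[L/2,R]}$, so the hypothesis transfers cleanly (Step~2). The price for this is that the head start $\W B_1(2L/3)$ is then \emph{not} deterministic, and the paper must prove (Step~1, via time-reversal of $B_2$ on $[L/2,2L/3]$, an argmax-not-at-the-endpoint argument, and a union over $1/n$ cutoffs to decouple) that the conditional law of $\W B_1(2L/3)$ given $B_2$ has a density with respect to Lebesgue measure. That is the real content the corollary needs, and it is precisely what your proof lacks. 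Also note that the time $2L/3$ (rather than $L/2$) is chosen so that there is a strictly positive slice $[L/2,2L/3]$ inside the interval where the hypothesis applies; a head start evaluated at $L/2$ could not be smoothed by this mechanism because the hypothesis says nothing about $B_2$ just to the left of $L/2$.
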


\begin{proof} We prove this corollary in four steps.

\medskip
\noindent\textbf{ Step 1.} We first show that the law of $\mathrm{W}B_1(2L/3)$ given $B_2$ is absolutely continuous with respect to $\lambda$, the Lebesgue measure on $\R$. Indeed, by \eqref{e:pitmantrans}
$$
\mathrm{W}B_1(2L/3)=\max\{B_1(2L/3),\max_{s\in [0,2L/3]} (B_2(s)-B_1(s)+B_1(2L/3))\}\,.
$$
With $B_3(s):=-B_1(2L/3-s)+B_1(2L/3)$ we get by the time-reversal symmetry of Brownian motion increments that $B_3$ is a standard Brownian motion and $B_3(2L/3)=B_1(2L/3)-b$. Thus
\begin{equation}\label{e:corwb}
\mathrm{W}B_1(2L/3)=\max\{B_3(2L/3)+b,\max_{s\in [0,2L/3]} (B_2(2L/3-s)+B_3(s))\}\,.
\end{equation}
Since the $\argmax_{s\in [0,2L/3]}(B_2(2L/3-s)+B_3(s))=\argmax_{s\in [0,2L/3]}(-B_4(s)+B_3(s))$ where $B_4(s):=B_2(2L/3)-B_2(2L/3-s)$ and by the assumption of our corollary, $B_4(\cdot), 0\le s\le L/6$ is absolutely continuous with respect to a standard Brownian motion independent of $B_3$,
\begin{equation}\label{e:maxend}
\P\Big(\argmax_{s\in [0,2L/3]}(B_2(2L/3-s)+B_3(s))=0\Big)\le \P\Big(\argmax_{s\in [0,L/6]}(-B_4(s)+B_3(s))=0\Big)=0\,.
\end{equation}
Now, we define $B_5(s)=B_2(2L/3-s)$ for $s\in [0,2L/3)$ and $B_5(2L/3)=\max\{b,B_2(0)\}$. By \eqref{e:maxend}, the maximum of $B_5(s)+B_3(s)$ on $[0,2L/3]$ is almost surely attained at $s>0$. Thus by \eqref{e:corwb},
\[\mathrm{W}B_1(2L/3)=\max_{s\in (0,2L/3]} (B_5(s)+B_3(s))\,.\]
By our assumptions, $B_3$ and $B_5$ are independent and $B_3$ is a standard Brownian motion. Now we condition on $B_2$, then $B_5$ is a fixed bounded function. Then our claim follows if we can show that the maximum of a Brownian motion $B_3$ plus a fixed bounded function $f$ on $(0,2L/3]$ is absolutely continuous with respect to $\lambda$, where 
$f$ is such that the maximum of $B_3+f$ on $(0,2L/3]$ is almost surely attained. To this end, for any $n\in \N$ such that $1/n<2L/3$, let $\mu^f_n$ be the law of
\[\max_{s\in [1/n,2L/3]} (f(s)+B_3(s))=B_3(1/n)+\max_{s\in [1/n,2L/3]} (f(s)+B_3(s)-B_3(1/n))\,.\]
Because of the independence of $B_3(1/n+\cdot)-B_3(1/n)$ and $B_3(1/n)$ and the absolute continuity of the law of $B_3(1/n)$, which is Gaussian with mean $0$ and variance $1/n$, with respect to $\lambda$, conditioning on $B_3(1/n+\cdot)-B_3(1/n)$, we get that $\mu^f_n\ll \lambda$. Thus for any measurable set $A$ such that $\lambda(A)=0$, $\mu^f_n(A)=0$ for all $n\in \N, n>3/(2L)$. Hence
\[\P\Big(\max_{s\in (0,2L/3]} (f(s)+B_3(s))\in A\Big)\le \sum_{n\in \N, n>3/(2L)}\mu^f_n(A)=0\,.\]
That is, the law of $\max_{s\in (0,2L/3]} (f(s)+B_3(s))$ is absolutely continuous with respect to $\lambda$.

\medskip
\noindent{\textbf{Step 2.}} Let $\mu$ denote the measure of a standard Brownian motion on $[0,R-2L/3]$. Let $B_2(2L/3 +\cdot)$ denote the process $B_2(2L/3 +s)$ for $s\in [0, R-2L/3]$. Then we claim that the joint law of 
\[(WB_1(2L/3), B_2(2L/3), B_2(2L/3 +\cdot)-B_2(2L/3))\ll \lambda^2\times \mu\,.\] 
As the law of $B_2$ restricted to $[\frac{L}{2},R]$ is absolutely continuous with respect to the law of a standard Brownian motion starting from $(0,0)$ restricted to $[\frac{L}{2},R]$, in particular this implies that the joint law of $(B_2(2L/3), B_2(2L/3+\cdot)-B_2(2L/3))$ is absolutely continuous with respect to $\lambda\times \mu$. By Step 1, the conditional law of $WB_1(2L/3)$ given $(B_2(2L/3), B_2(2L/3+\cdot)-B_2(2L/3))$ is absolutely continuous with respect to $\lambda$. This proves the claim.

\medskip
\noindent{\textbf{Step 3.}} Let 
\begin{align*}
B_1'(t)&=B_1(t+2L/3)-B_1(2L/3)+ \mathrm{W}B_1(2L/3),\qquad \mbox{ for } t\in [0, R-2L/3],  \qquad\mbox{and} \\
B_2'(t)&=B_2(t+2L/3)\,,\qquad \qquad \qquad \qquad \qquad \qquad \quad \,\,\, \mbox{ for } t\in [0, R-2L/3]\,.
\end{align*}
Then we show that the joint law of 
\[(WB_1(2L/3), B_2(2L/3), B_1'(\cdot)-B_1'(0), B_2'(\cdot)-B_2'(0))\ll \lambda^2 \times \mu^2\,.\] 
This easily follows from Step 2, as $B_2'(\cdot)-B_2'(0)=B_2(2L/3 +\cdot)-B_2(2L/3)$ and $B_1'(\cdot)-B_1'(0)$ is a standard Brownian motion independent of $B_1(s), 0\le s\le 2L/3$, and $B_2$ and hence independent of $WB_1(2L/3), B_2(2L/3)$ and $B_2'(\cdot)-B_2'(0)$.

\medskip
\noindent{\textbf {Step 4.}} Finally we prove the corollary. With $B_1', B_2'$ as defined in Step 3, if $\mathrm{W}B'_1$ denotes the top line of the Pitman transform of $B_1'$ off $B_2'$ as defined in \eqref{e:pitmantrans}, then
\begin{equation}\label{e:defb1prime}
\mathrm{W}B_1(t+2L/3)=\mathrm{W}B'_1(t),\quad \mbox{for } t\ge 0\,.
\end{equation}
By Step 3, $B_1',B_2'$ are absolutely continuous with respect to two independent Brownian motions starting from $\lambda^2$. Conditioning on the start points, and using Lemma \ref{l:case2}, we have the result using \eqref{e:defb1prime}.
\end{proof}


Finally we have the following theorem.

 \begin{theorem}\label{l:condabscont}For any $r\in (0,y_0)$, let $\mu_r$ denote the law of a Brownian motion starting from $(0,0)$ with diffusion parameter $2$ restricted to $[r,y_0]$.
 For any $k\in \N$ and $\mathbf{g}=(g_1,g_2,\ldots,g_k)\in \R^k$, let $\xi_{k,\mathbf{g},r}$ denote the law of
\begin{equation}\label{e:opt}
\H_{k,\mathbf{g}}(y):=\max_{1\le \l\le k}(g_\l+\B[(0,\l)\to (y,1)])\,,
\end{equation}
 restricted to $[r,y_0]$, where $\B_1,\B_2,\ldots$ are independent standard Brownian motions with diffusion parameter $2$ and $\B=(\B_1,\B_2,\ldots)$. Then for all $k\in \N$, all $\mathbf{g}=(g_1,g_2,\ldots,g_k)\in \R^k$ and all $r\in (0,y_0)$,
 \[\xi_{k,\mathbf{g},r}\ll\mu_r\,.\]
 \end{theorem}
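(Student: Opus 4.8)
The plan is to induct on $k$. For $k=1$, the quantity $\H_{1,g_1}(y) = g_1 + \B[(0,1)\to(y,1)] = g_1 + \B_1(y)$ is literally a Brownian motion with diffusion parameter $2$ shifted by a constant, so its restriction to $[r,y_0]$ has law mutually absolutely continuous with $\mu_r$ (the constant shift $g_1$ disappears because $\mu_r$ is the law starting from $0$, while $\H_{1,g_1}(r) = g_1 + \B_1(r)$ has an absolutely continuous starting distribution — so really one wants the increment process, but increments of $\H_{1,g_1}$ are exactly those of $\B_1$). The base case is immediate.

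For the inductive step, I would peel off the top line using the Pitman/metric-composition structure. The key identity is that last passage across $k$ lines factors through the second line: by the metric composition law (Property \ref{p:metrcomp}),
\[
\B[(0,\l)\to(y,1)] = \sup_{z\in[0,y]}\bigl(\B[(0,\l)\to(z,2)] + \B[(z,2)\to(y,1)]\bigr)
\]
for $\l \ge 2$, while $\B[(0,1)\to(y,1)] = \B_1(y)$. Writing $\B_1 = B_1$ and defining a new "effective second process" built from $\B_2,\B_3,\dots$ as $\tilde B_2(z) := \max_{2\le \l\le k}\bigl(g_\l + \B[(0,\l)\to(z,2)]\bigr)$ — which, after reindexing lines $2,\dots,k$ as $1,\dots,k-1$, is exactly $\H_{k-1,\mathbf g'}$ in the Brownian environment $(\B_2,\dots,\B_k)$ — one gets
\[
\H_{k,\mathbf g}(y) = \max\Bigl\{g_1 + B_1(y),\ \max_{2\le\l\le k}\bigl(g_\l + \B[(0,\l)\to(y,1)]\bigr)\Bigr\} = \W B_1(y),
\]
where $\W B_1$ is the top line of the Pitman transform of $B_1 + g_1$ off the process $\tilde B_2$ (one needs to match conventions: $\W f_1(t) = \max\{f_1(t), \max_{s\le t}(f_2(s) + f_1(t) - f_1(s))\}$, and since the transport of $\tilde B_2$-mass to $y$ along line $1$ contributes $\B_1(y) - \B_1(s)$, this is exactly of that form). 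The point is that $B_1 = \B_1$ is a genuine standard Brownian motion (diffusion $2$) independent of $\tilde B_2$, and by the inductive hypothesis the law of $\tilde B_2$ restricted to $[r/2, y_0]$ is absolutely continuous with respect to that of a Brownian motion with diffusion parameter $2$.

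Now I would invoke Corollary \ref{l:corcase2} (stated there for diffusion parameter $1$, but it applies after the trivial rescaling $B \mapsto B/\sqrt 2$, or one simply re-runs its proof verbatim with diffusion $2$): with $B_1$ a Brownian motion and $B_2 = \tilde B_2$ a process whose restriction to $[r/2, y_0]$ is absolutely continuous with respect to Brownian motion, the top line $\W B_1$ restricted to $[r, y_0]$ is absolutely continuous with respect to Brownian motion on $[r,y_0]$. That is exactly $\xi_{k,\mathbf g, r} \ll \mu_r$, completing the induction. The main obstacle — and the step deserving the most care — is the bookkeeping in the inductive step: verifying that $\tilde B_2$ defined via $\max_{2\le\l\le k}(g_\l + \B[(0,\l)\to(z,2)])$ really does match the form $\H_{k-1,\mathbf g'}$ in the shifted environment (using translation invariance of the Brownian lines and the fact, noted after \eqref{e:gap}, that adding a common function to all lines leaves geodesics unchanged), and that the resulting $(B_1, \tilde B_2)$ pair plugs correctly into the hypotheses of Corollary \ref{l:corcase2} — in particular that $B_1$ and $\tilde B_2$ are independent (they involve disjoint sets of the $\B_i$) and that the interval $[r/2, y_0]$ on which absolute continuity of $\tilde B_2$ is needed is covered by the inductive hypothesis (applied with $r' = r/2$). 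Everything else is routine.
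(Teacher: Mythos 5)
Your proof is correct and follows essentially the same route as the paper: induct on $k$, peel off the top line via the metric composition law, identify the max over lines $2,\dots,k$ with $\H'_{k-1,\mathbf g'}$ (your $\tilde B_2$), and close the induction via Corollary \ref{l:corcase2} applied on $[r/2,y_0]$. The only cosmetic difference is that the paper treats $k=2$ as a separate base case via Lemma \ref{l:case2}, whereas you fold it into the general inductive step; both are fine.
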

\begin{proof} We prove this by induction on $k$.

Case $k=1$. Fix $g_1\in \R$ and $r\in (0,y_0)$. Then
 \[\H_{1,g_1}(y)=g_1+\B_1(y)\,.\]
  Since $\H_{1,g_1}(r)$ is absolutely continuous with respect to the law of $\B_1(r)$, $\xi_{1,g_1,r}\ll \mu_r$.

Case $k=2$. Fix $\mathbf{g}=(g_1, g_2)\in \R^2$ and $r\in (0,y_0)$. Then using Lemma \ref{l:case2} with $L=r, R=y_0, b_1=g_1$ and $b_2=g_2$ and the Pitman transform formula in \eqref{e:ptr}, we have $\xi_{2,\mathbf{g},r}\ll \mu_r$.

Case $k\geq 3$. The idea here is to replace the last $k-1$ lines by a single line, using the metric composition law and the induction hypothesis. Then the $k$ line ensemble will reduce to the previous case with two lines. Now we make this precise.

We assume that $\xi_{k-1,\mathbf{g},r}\ll\mu_r$ for all $\mathbf{g}\in \R^{k-1}$ and all $r\in (0,y_0)$. We want to prove it for $k$. To this end, fix any $\mathbf{g}=(g_1,g_2,\ldots,g_k)\in \R^k$ and any $r\in (0,y_0)$. Let $\mathbf{g'}=(g_2,\ldots,g_{k})\in \R^{k-1}$,  and
\[\H'_{k-1,\mathbf{g'}}(y):=\max_{2\le \l\le k}(g_\l+\B[(0,\l)\to (y,2)])\,,
\]
for all $y\geq 0$.
Then clearly
\begin{itemize}
\item $\H'_{k-1,\mathbf{g'}}(0)=\max_{2\le \l \le k} g_\l\,,$
\item $\H'_{k-1,\mathbf{g'}}(\cdot)\overset{d}{=}\H_{k-1,\mathbf{g'}}(\cdot)\,,$ and
\item $\H'_{k-1,\mathbf{g'}}$ is independent of $\B_1$.
\end{itemize}
  Let
\begin{equation}\label{e:defL}
\mathscr L=(\B_1,\H'_{k-1,\mathbf{g'}})\,.
\end{equation}

 Since for all $2\leq \l\le k$, by metric composition law, Property \ref{p:metrcomp},
\[\B[(0,\l)\to (y,1)]=\sup_{0\leq t\leq y}(\B[(0,\l)\to (t,2)]+\B_1(y)-\B_1(t))\,,\]
hence
\begin{eqnarray*}
\max_{2\le \l\le k}(g_\l+\B[(0,\l)\to (y,1)])&=&\max_{2\le \l\le k}\{g_\l+\sup_{0\leq t\leq y}(\B[(0,\l)\to (t,2)]+\B_1(y)-\B_1(t))\}\\
&=&\sup_{0\leq t\leq y}\max_{2\le \l\le k}\{g_\l+\B[(0,\l)\to (t,2)]+\B_1(y)-\B_1(t)\}\\
&=&\sup_{0\leq t\leq y}\{\H'_{k-1,\mathbf{g'}}(t)+\B_1(y)-\B_1(t)\}
\end{eqnarray*}
now by definition of $\H'$ this equals
\begin{align*}
\H'_{k-1,\mathbf{g'}}(0)&+\sup_{0\leq t\leq y}\{\H'_{k-1,\mathbf{g'}}(t)-\H'_{k-1,\mathbf{g'}}(0)+\B_1(y)-\B_1(t)\}\\
&= \H'_{k-1,\mathbf{g'}}(0)+\mathscr L[(0,2)\to (y,1)]\,,
\end{align*}
where $\mathscr L$ is as defined in \eqref{e:defL}. Then
\begin{eqnarray}\nonumber
\H_{k,\mathbf{g}}(y)&=&\max_{1\le \l\le k}(g_\l+\B[(0,\l)\to (y,1)])\nonumber\\
&=&\max\{g_1+\B_1(y),\max_{2\leq \l\leq k}(g_\l+\B[(0,\l)\to (y,1)])\}
\end{eqnarray}
Since $\B_1(y)=\mathscr L[(0,1)\to (y,1)]$ we can write
\begin{eqnarray}
\H_{k,\mathbf{g}}(y)&=& \max\{g_1+\mathscr L[(0,1)\to (y,1)],\H'_{k-1,\mathbf{g'}}(0)+\mathscr L[(0,2)\to (y,1)]\}\nonumber\\
&=& \max_{\l=1,2}\{\tilde{g}_\l+\mathscr L[(0,\l)\to (y,1)]\}\,,\label{e:reducedcase2}
\end{eqnarray}
with $\tilde{g}_1=g_1$ and $\tilde{g}_2=H'_{k-1,\mathbf{g'}}(0)=\max_{2\le i \le k} g_i$.

Since $\mathscr L=(\B_1,\H'_{k-1,\mathbf{g'}})$, using the Pitman transform formula in \eqref{e:ptr}, we realize from \eqref{e:reducedcase2} that $\H_{k,\mathbf{g}}$ is the top line of the Pitman transform of $\B_1+{g}_1$ with respect to $\H'_{k-1,\mathbf{g'}}$. Here $\B_1$ is a standard Brownian motion, $\B_1$ and $\H'_{k-1,\mathbf{g'}}$ are independent and the law of $\H'_{k-1,\mathbf{g'}}(\cdot)$ on $[r/2,y_0]$ is $\xi_{k-1,\mathbf{g'},r/2}$. By induction hypothesis,
\[\xi_{k-1,\mathbf{g'},r/2}\ll \mu_{r/2}\,.\]
 Hence by Corollary \ref{l:corcase2}, we get 
   \[\xi_{k,\mathbf{g},r}\ll\mu_r\,,\]
 closing the induction.
 \end{proof}

\section{Proof of Theorem \ref{t:gen}}\label{s:proof} Finally in this section we shall prove Theorem \ref{t:gen}. However, before proceeding to proving Theorem \ref{t:gen}, we will start by proving the Brownian absolute continuity result for simpler initial conditions.

To this end, first we are interested in the KPZ fixed point $h(y):=h_1(y)$, with $h_0$  a continuous function defined on $[x_1,x_2]$, for some fixed $x_1<x_2$. Thus
\begin{equation}\label{e:defh}
h(y)= \sup_{x\in [x_1,x_2]}  \left(h_0(x)+ \mathcal \S(x,y)\right)\,.
\end{equation}
We would like to show the following.
\begin{proposition}\label{t:main} Let $h_0$ be any continuous function on $[x_1,x_2]$ and $h_0(x)=-\infty$ for all $x\notin [x_1,x_2]$. Then $h$, as defined in \eqref{e:defh},
is Brownian on compacts.
\end{proposition}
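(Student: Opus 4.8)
The plan is to reduce Proposition \ref{t:main} to Theorem \ref{l:condabscont} by opening up the variational formula \eqref{e:defh} using the last-passage description of the Airy sheet developed in Section \ref{s:geo}, and then invoking the Brownian Gibbs property to compare with genuine Brownian motions. First I would fix a compact interval $[y_1,y_2]$ on which we want to establish absolute continuity; by translation invariance of the Airy sheet (property (i)) and of the construction, I may assume $x_1>1$ and $y_1>1$ after shifting both the spatial and ``second'' spatial coordinates by the same constant, which is harmless since it only shifts $h$ by a deterministic function of $y$ (and absolute continuity is preserved under adding deterministic continuous drifts). Choose $x_0>x_2\vee 1$ and $y_0>y_2\vee 1$ rational, and let $L_0=\pi[x_0'\to y_0'](0)$ for rationals $x_0'\ge x_0$, $y_0'\ge y_0$ as in Lemma \ref{l:decomp}.

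Next I would apply Lemma \ref{l:decomp}: almost surely, for all rational $x\in[1,x_0]$ and all $y\in[1,y_0]$,
\[
\S(x,y)=\max_{\l\le L_0}\bigl(\A[x\to(0,\l)]+\A[(0,\l)\to(y,1)]\bigr),
\]
and by continuity of $\S$ in $x$ this extends to all real $x\in[x_1,x_2]$. Substituting into \eqref{e:defh} and interchanging the two suprema gives
\[
h(y)=\max_{\l\le L_0}\Bigl(\bigl[\sup_{x\in[x_1,x_2]}(h_0(x)+\A[x\to(0,\l)])\bigr]+\A[(0,\l)\to(y,1)]\Bigr)=\max_{\l\le L_0}\bigl(G_\l+\A[(0,\l)\to(y,1)]\bigr),
\]
where $G_\l:=\sup_{x\in[x_1,x_2]}(h_0(x)+\A[x\to(0,\l)])$. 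The key structural point is that, by Lemma \ref{insigma}, each $\A[x\to(0,\l)]$ is $\mathcal F_-$-measurable, hence so is each $G_\l$, and so is the random integer $L_0$ (it is defined through geodesics living at negative or zero times — here I would need to confirm, or argue, that $\pi[x_0'\to y_0'](0)\in\mathcal F_-$; this uses \eqref{e:wow}/\eqref{e:loc} with $T\le 0$, or one works on $[1,y_0]$ conditioning on the full $\sigma$-field generated by the lines on $(-\infty,0]$ together with this single integer). Meanwhile, conditionally on $\mathcal F_-$, the Brownian Gibbs property tells us that the lines $\A_1,\A_2,\dots$ on $[0,y_0]$, appropriately read off, are absolutely continuous with respect to independent Brownian motions with diffusion parameter $2$ (up to the non-intersection conditioning, which only costs another layer of absolute continuity on a compact sub-interval, exactly as in the proof of Lemma \ref{l:case2}).

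Putting this together: condition on $\mathcal F_-$. Then $L_0=k$ and $\mathbf G=(G_1,\dots,G_k)$ become deterministic, and $y\mapsto h(y)$ on $[1,y_0]$ has exactly the form $\H_{k,\mathbf g}$ of \eqref{e:opt} but with the increments of $\A_1,\dots,\A_k$ on $[0,y_0]$ in place of independent Brownian motions $\B_1,\dots,\B_k$. Since on the compact interval $[r,y_0]$ (any $0<r<y_0$, e.g.\ $r=1$) those increments are mutually absolutely continuous with respect to $k$ independent diffusion-$2$ Brownian motions — first remove the non-intersection conditioning, which is absolutely continuous in the compact window, then apply Brownian Gibbs — Theorem \ref{l:condabscont} gives that the conditional law of $h|_{[1,y_0]}$, hence of $h|_{[y_1,y_2]}$, is absolutely continuous with respect to $\mu_1$ (a diffusion-$2$ Brownian motion). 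Finally, integrating out $\mathcal F_-$: a mixture of measures each absolutely continuous with respect to Brownian motion is itself absolutely continuous with respect to Brownian motion (if $P(\cdot\mid\mathcal F_-)\ll \mathrm{BM}$ a.s., then for any Borel set $N$ with $\mathrm{BM}(N)=0$ we get $P(h|_{[y_1,y_2]}\in N\mid\mathcal F_-)=0$ a.s., hence $P(h|_{[y_1,y_2]}\in N)=0$), and undoing the translation used at the start recovers the statement for the original interval. The main obstacle I anticipate is the careful bookkeeping of measurability — showing $L_0$ and the $G_\l$ are measurable with respect to a $\sigma$-field that is conditionally independent of the Brownian-Gibbs resampling on $[0,y_0]$ — together with the interchange of the two suprema (which requires that the maximum over $x\in[x_1,x_2]$ is attained and that $L_0$ genuinely dominates $\pi[x\to y](0)$ for all relevant $x,y$, supplied by Lemma \ref{l:infporder} and Lemma \ref{l:useful}); the absolute-continuity mechanics, once this is set up, are routine given Theorem \ref{l:condabscont}.
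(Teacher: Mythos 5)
Your overall strategy matches the paper's: apply Lemma \ref{l:decomp} to rewrite the variational formula over a random but finite number $L_0$ of lines, express $h$ as $\max_{\l\le L_0}(G_\l+\A[(0,\l)\to(y,1)])$, observe $G_\l\in\mathcal F_-$, use Brownian Gibbs to compare $(\A_1,\dots,\A_k)$ with Brownian motions, and invoke Theorem \ref{l:condabscont}. The ingredients are identical.

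However, there is a genuine gap in your measurability bookkeeping, and you have in fact put your finger on exactly the spot where it lives. You propose to condition on $\mathcal F_-$ (or on $\mathcal F_-\vee\sigma(L_0)$) so that $L_0$ and the $G_\l$ are fixed, and then apply Brownian Gibbs to what remains. The trouble is that $L_0=\pi[x_0'\to y_0'](0)$ is the level at which the infinite geodesic crosses time $0$, and this value depends on the part of the Airy line ensemble on $(0,y_0']$, not only on $\mathcal F_-$. So $L_0\notin\mathcal F_-$, and your first suggested fix does not hold. Your second suggested fix, conditioning on $\mathcal F_-\vee\sigma(L_0)$, also fails: once you condition on $L_0$, the conditional law of $(\A_1,\dots,\A_k)$ on $[0,y_0]$ is tilted by information involving those very lines, so the Brownian Gibbs comparison no longer gives absolute continuity with respect to independent Brownian motions. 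This is not a cosmetic issue — the whole point of the proof is to decouple the random data $(G_\l,L_0)$ from the resampled Brownian lines, and conditioning on $L_0$ recouples them.

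The paper sidesteps this cleanly by never conditioning on $L_0$. Instead it fixes an arbitrary $k$, shows that the law $\xi_k$ of $\H_k(y):=\max_{1\le\l\le k}(G_\l+\A[(0,\l)\to(y,1)])$ on $[1,y_0]$ satisfies $\xi_k\ll\mu$ (this needs only $G_1,\dots,G_k\in\mathcal F_-\subseteq\mathcal F_k$ and the Brownian Gibbs property for the top $k$ lines given $\mathcal F_k$, which is exactly Theorem \ref{l:condabscont} applied after conditioning on $\mathcal F_k$), and then closes the argument with a union bound rather than a conditional expectation: for $\mu(A)=0$,
\[
\P(h\in A)=\sum_{k\ge 1}\P(\{\H_k\in A\}\cap\{L_0=k\})\le\sum_{k\ge 1}\xi_k(A)=0\,.
\]
The inequality $\P(\{\H_k\in A\}\cap\{L_0=k\})\le\P(\H_k\in A)$ is the step that replaces your conditioning on $L_0$, and it requires no measurability of $L_0$ with respect to $\mathcal F_-$ or $\mathcal F_k$. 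If you replace your "condition on $\mathcal F_-$, then integrate out" step by this countable-decomposition-plus-domination step, your argument becomes correct and is then identical in substance to the paper's.
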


\begin{proof}[Proof of Proposition \ref{t:main}] Let $y_1<y_2$ and we consider $h(y)$ for $y\in [y_1,y_2]$. Because of translation invariance of $\S(\cdot,\cdot)$, and extending the intervals if necessary, we can assume without loss of generality that $x_1=y_1=1$ and $x_2=x_0$ and $y_2=y_0$ for some $x_0,y_0>1$.

From \eqref{e:defh}, using the continuity of $h_0(x)$ and $\S(x,y)$ as a function in $x$, we get for all $y\in[1,y_0]$,
\[h(y)=\sup_{x\in [1,x_0]}(h_0(x)+\S(x,y))=\sup_{x\in [1,x_0]\cap \Q}(h_0(x)+\S(x,y))\,.\]
Now applying Lemma \ref{l:decomp}, with $L_0$ as defined in that Lemma, we have almost surely for all $y\in [1,y_0]$,
\begin{eqnarray}\label{e:decomp}
h(y)&=&\sup_{x\in [1,x_0]\cap \Q}(h_0(x)+\max_{\l\le L_0}(\A[x\to(0,\l)]+\A[(0,\l)\to (y,1)])\nonumber\\
&=&\max_{\l\le L_0}(G_\l+\A[(0,\l)\to (y,1)])\,,
\end{eqnarray}
where
\begin{equation}\label{e:defg}
G_\l:= \sup_{x\in[1,x_0]\cap \Q} (h_0(x)+\mathcal A[x\to(0,\l)])\,.
\end{equation}

Observe that for each $\l\in \N$
\begin{itemize}
\item $G_\l<\infty$ almost surely. Indeed, by Lemma \ref{l:boundA}, almost surely for all $x\in \Q^+$, $\A[x\to (0,\l)]\leq \S(x,0)$. Hence
\[G_\l\leq \sup_{x\in[1,x_0]\cap \Q} (h_0(x)+\mathcal \S(x,0))\leq \sup_{x\in[1,x_0]} (h_0(x)+\mathcal \S(x,0))\,.\]
Since $h_0(x)+\S(x,0)$ is continuous on $[1,x_0]$, $G_\l<\infty$ almost surely.
\item $G_\l\in \F_-$ where
\[\mathcal F_-=\sigma\{\A_i(x): x\leq 0, i=1,2,\ldots\}\,.\]
This follows directly from Lemma \ref{insigma}.
\end{itemize}
 Now, for any $k\in \N$  let $\xi_{k}$ denote the law of
\begin{equation}\label{e:opt}
\mathcal H_{k}(y):=\max_{1\le \l\le k}(G_\l+\A[(0,\l)\to (y,1)])\,,
\end{equation}
restricted to $[1,y_0]$, where $G_\l$ for $\l\in \N$ are as defined in \eqref{e:defg}. Then we first show that $\xi_k\ll \mu$, where $\mu$ denotes the law of a standard Brownian motion with diffusion parameter $2$ on $[1,y_0]$.

To this end, we first make the following observation. For any $k\in \N$, let $\F_k$ denote the sigma algebra generated by the set
 \[\{\A_i(x): (i,x)\notin\{1,2,\ldots,k\}\times(0,y_0+1)\}\,.\]
 Using the Brownian Gibbs property, the law of the top $k$ lines of the Airy line ensemble between $[0,y_0+1]$ given $\F_k$ is that of $k$ independent Brownian bridges $B_1,B_2,\ldots, B_k:[0,y_0+1]\mapsto \R$ with diffusion parameter $2$ such that $B_i(0)=\A_i(0)$ and $B_i(y_0+1)=\A_i(y_0+1)$, for all $i=1,2,\ldots, k$, conditioned not to intersect each other and the line $\A_{k+1}$. Thus, the law of the top $k$ lines of the Airy line ensemble between $[0,y_0]$ given $\F_k$ is absolutely continuous with respect to that of $k$ independent Brownian motions with diffusion parameter $2$ starting from $\A_1(0)>\A_2(0)>\ldots >\A_k(0)$. So the conditional distribution of $\A_i(\cdot)-\A_i(0):[0,y_0]\mapsto\R$ for $i=1,2,\ldots,k$ given $\F_k$ is absolutely continuous with respect to $k$ independent standard Brownian motions $\B_1,\B_2,\ldots, \B_k$ with diffusion parameter $2$.

 Hence, conditioning on $\F_k$, using the observation that $G_\l\in \F_- \subseteq \F_k$, and Theorem \ref{l:condabscont}, we have
\[\xi_k\ll\mu\,.\]

Now for any measurable set $A\in \C[1,y_0]$ with $\mu(A)=0$, we have $\xi_k(A)=0$ since $\xi_k\ll\mu$, for all $k\in \N$. Hence
\begin{eqnarray*}
\P(h(y)\in A)&=&\sum_{k=1}^\infty\P(\{h(y)\in A\} \cap \{L_0=k\})\\
&=&\sum_{k=1}^\infty\P(\{\mathcal H_{k}(y)\in A\} \cap \{L_0=k\})\leq \sum_{k=1}^\infty \xi_k(A)=0\,.
\end{eqnarray*}
Hence the law of $h(y)$ on $[1,y_0]$ is absolutely continuous with respect to $\mu$.

Hence the distribution of $h(y)-h(0)$ on $[1,y_0]$ is absolutely continuous with respect to a Brownian motion starting from $(1,0)$ with diffusion parameter $2$ on $[1,y_0]$.
\end{proof}

The next proposition shows the Brownian absolute continuity for any continuous $1$-finitary initial condition $h_0$.
\begin{proposition}\label{t:cont}For any continuous $1$-finitary initial condition $h_0$ function, the random function
\[h(y)= \sup_{x\in \R}  \left(h_0(x)+ \mathcal \S(x,y)\right)\]
is Brownian on compacts.
\end{proposition}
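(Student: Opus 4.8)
The plan is to deduce Proposition \ref{t:cont} from Proposition \ref{t:main} by a truncation-plus-tail-estimate argument. Fix a compact interval $[y_1,y_2]$; by translation invariance of $\S$ we may work on $[1,y_0]$. For each $m>0$ write $h^{(m)}(y)=\sup_{x\in[-m,m]}(h_0(x)+\S(x,y))$, which is of the form covered by Proposition \ref{t:main} (after further translating so the domain of the initial condition is $[1,x_0]$), hence $h^{(m)}$ is Brownian on compacts. The claim is that almost surely, for $m$ large enough (random, but not depending on $y\in[1,y_0]$), $h(y)=h^{(m)}(y)$ for all $y\in[1,y_0]$ simultaneously; once this is known, absolute continuity transfers: for any $A\subset\C_0[1,y_0]$ with Brownian measure zero,
\[
\P(h-h(1)\in A)\le \sum_{m\in\N}\P\bigl(h-h(1)\in A,\ h=h^{(m)}\text{ on }[1,y_0],\ h\neq h^{(m-1)}\text{ somewhere}\bigr)=0,
\]
since each summand is at most $\P(h^{(m)}-h^{(m)}(1)\in A)=0$.

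The substance is therefore the localization statement: the supremum defining $h(y)$ is attained, uniformly over $y\in[1,y_0]$, in a bounded random set of $x$'s. This is exactly the kind of estimate alluded to in the commented-out Lemma \ref{l:max} in the excerpt, and I would run that argument. Using $\S(x,y)\overset{d}{=}\S(y,x)$ and translation invariance, $h_0(x)+\S(x,y)\overset{d}{=}h_0(x)+\A_1(x-y)=\cR_1(x-y)-(x-y)^2+h_0(x)$ as a process in $x$, where $\cR_1(z)=\A_1(z)+z^2$ is the stationary Airy$_2$ process. The $1$-finitary hypothesis gives $h_0(x)-x^2=-|x|\,r(x)$ with $r(x)\to\infty$; combined with $2xy$ being linear in $x$ and uniformly controlled for $y\in[1,y_0]$, the drift $h_0(x)-(x-y)^2$ is eventually $\le -c|x|$ uniformly in $y$. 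Standard tail bounds for the stationary Airy$_2$ process (as in \citeDOV, Lemma 9.5, and the shock/monotonicity inequality used there) then give $\P\bigl(\exists\,y\in[1,y_0],\ \exists\,x\text{ with }|x|>m:\ h_0(x)+\S(x,y)\ge h_0(x^\star)+\S(x^\star,y)\text{ for the maximizer }x^\star\bigr)\le c e^{-d m^{3/2}}$, which is summable; an alternative packaging is to fix two endpoints $y_1,y_2$, localize each of $\J_i(x)=h_0(x)+\S(x,y_i)$ to a bounded set via the one-dimensional tail estimate, and then use the inequality $\S(x_0,y_1)+\S(x,y)\ge\S(x_0,y)+\S(x,y_1)$ for $x_0<x$, $y_1<y$ (the planarity/monotonicity of the Airy sheet) to conclude that no $x_0$ outside $[L,R]$ can beat the interior maximizer for any intermediate $y$. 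Either way Borel--Cantelli produces the random $m_0$ with $h=h^{(m_0)}$ on $[1,y_0]$.

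I would then assemble: state and prove the localization lemma (the resurrected Lemma \ref{l:max}), note that $h^{(m)}$ satisfies the hypotheses of Proposition \ref{t:main} for every $m$ (a continuous function on a compact interval, $-\infty$ outside), and combine via the displayed countable-union bound above, concluding that $y\mapsto h(y)-h(y_1)$ on $[y_1,y_2]$ is absolutely continuous with respect to Brownian motion with diffusion parameter $2$. The main obstacle is the uniform-in-$y$ localization: one must be careful that the random truncation level works simultaneously for all $y$ in the compact interval, not just for fixed $y$, which is why the argument routes through the two endpoints plus the Airy sheet monotonicity inequality rather than a naive union bound over $y$. A secondary technical point is checking that the tail estimate for $\sup_{|x|>m}(h_0(x)+\S(x,y))$ genuinely follows from the $1$-finitary condition with the $|x|\log(1+|x|)$-type decay — the $\log$ factor is exactly what makes the drift beat the $O(\sqrt{|x|}\log|x|)$ fluctuations of the Airy$_2$ process and gives a summable (indeed stretched-exponential) bound.

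\iffalse
\begin{proof}[Proof of Proposition \ref{t:cont}]
Fix $y_1<y_2$; by translation invariance of $\S$ we may assume $y_1=1$, $y_2=y_0>1$. For $m\in\N$ let
\[h^{(m)}(y)=\sup_{x\in[-m,m]}(h_0(x)+\S(x,y))\,.\]
Each $h^{(m)}$ is of the form treated in Proposition \ref{t:main} (a continuous initial condition on a compact interval), so $h^{(m)}$ is Brownian on compacts; in particular the law of $y\mapsto h^{(m)}(y)-h^{(m)}(1)$ on $[1,y_0]$ is absolutely continuous with respect to the law of Brownian motion started at $(1,0)$ with diffusion parameter $2$.

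We claim there is an almost surely finite random $M$ such that $h(y)=h^{(M)}(y)$ for all $y\in[1,y_0]$. Granting this, let $A\subseteq\C_0[1,y_0]$ have Brownian measure zero. Then
\begin{eqnarray*}
\P\bigl(h-h(1)\in A\bigr)&=&\sum_{m=1}^\infty \P\bigl(h-h(1)\in A,\ M=m\bigr)\\
&\le&\sum_{m=1}^\infty \P\bigl(h^{(m)}-h^{(m)}(1)\in A\bigr)=0\,,
\end{eqnarray*}
which proves the proposition.

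It remains to prove the claim. Set $\J_i(x)=h_0(x)+\S(x,y_i)$ for $i=1,2$. By $\S(x,y)\overset{d}{=}\S(y,x)$, translation invariance, and $\S(0,\cdot)=\A_1(\cdot)$,
\[\J_i(x)\overset{d}{=}h_0(x)+\A_1(x-y_i)=\cR_1(x-y_i)+h_0(x)-(x-y_i)^2\,,\]
where $\cR_1(z)=\A_1(z)+z^2$ is the stationary Airy$_2$ process. Since $h_0$ is $1$-finitary, $h_0(x)-x^2=-|x|r(x)$ with $r(x)\to\infty$ as $|x|\to\infty$, so
\[h_0(x)-(x-y_i)^2=-|x|r(x)+2xy_i-y_i^2\le -|x|\]
for all $|x|$ large. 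Using the tail bounds for the stationary Airy$_2$ process (as in \citeDOV, Lemma $9.5$), there are $c,d>0$ with
\[\P\Bigl(\sup_{|x|>m}\J_i(x)\ge \sup_{x\in[-1,1]}\J_i(x)\Bigr)\le ce^{-dm^{3/2}}\,,\]
so by Borel--Cantelli there are a.s.\ random $M_1,M_2$ with $\sup_{x\in\R}\J_i(x)=\sup_{|x|\le M_i}\J_i(x)$. Pick $L<R$ with $[-M_i,M_i]\subseteq[L,R]$ for $i=1,2$. For $x_0<L\le x\le R$ and $y\in[1,y_0]$, the Airy sheet inequality
\[\S(x_0,y_1)+\S(x,y)\ge\S(x_0,y)+\S(x,y_1)\]
gives, after adding $h_0(x_0)+h_0(x)$,
\[\J_1(x_0)+\bigl(h_0(x)+\S(x,y)\bigr)\ge\bigl(h_0(x_0)+\S(x_0,y)\bigr)+\J_1(x)\,,\]
and taking the supremum over $x\in[L,R]$ together with $\sup_{x\in[L,R]}\J_1(x)=\sup_x\J_1(x)\ge\J_1(x_0)$ yields
\[\sup_{x\in[L,R]}(h_0(x)+\S(x,y))\ge h_0(x_0)+\S(x_0,y)\,.\]
The symmetric argument handles $x_0>R$. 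Hence $h(y)=\sup_{x\in[L,R]}(h_0(x)+\S(x,y))$ for all $y\in[1,y_0]$, so the claim holds with any integer $M\ge R$, $M\ge-L$.
\end{proof}
\fi
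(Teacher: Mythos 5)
Your overall structure — truncate $h_0$ to $[-m,m]$, apply Proposition \ref{t:main} to each truncation, localize the supremum to a random compact set, and conclude by a countable-union bound on a Brownian-null set — is exactly the paper's proof. The one place you diverge is how the localization is obtained. You resurrect the commented-out Lemma \ref{l:max}: first localize the suprema of $\J_i(x)=h_0(x)+\S(x,y_i)$ at the two endpoints $y_1,y_2$ using the $1$-finitary drift plus tail bounds on the stationary Airy$_2$ process, then propagate to all intermediate $y$ via the Airy-sheet planarity inequality $\S(x_0,y_1)+\S(x,y)\ge\S(x_0,y)+\S(x,y_1)$. The paper instead invokes Proposition \ref{l:conditionh0}, proved in Section \ref{s:last} directly from the uniform quantitative bound $|\mathcal L(x,0;y,t)+(x-y)^2/t|\le C(1+|x|^{1/5}+\dots)$ of \citeDOV{} Corollary 10.7, which yields localization that is simultaneously uniform over compacts in $(y,t)$ — a statement the proof of Theorem \ref{t:gen} also needs (there one must localize a random continuous $1/2$-finitary initial condition as well). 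Your route is perfectly correct and a bit more self-contained at the level of the Airy sheet (it uses only endpoint tail bounds plus monotonicity of increments, not the full directed-landscape modulus-of-continuity estimate), while the paper's route factors the localization into a single reusable proposition that also delivers the converse characterization of finitariness. Both arguments are sound; the difference is organizational rather than substantive.
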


\begin{proof}[Proof of Proposition \ref{t:cont}]Let $y_1<y_2$ and we consider $h(y)$ for $y\in [y_1,y_2]$. Let $L$ and $R$ be as in Proposition \ref{l:conditionh0} for $y\in[y_1,y_2]$. Then
\[h(y):= \sup_{x\in \R}  \left(h_0(x)+ \mathcal \S(x,y)\right)=\sup_{x\in [L,R]}  \left(h_0(x)+ \mathcal \S(x,y)\right)\,,\]
for all $y\in [y_1,y_2]$. Now for all $n\in \N$, let
\[\mathscr H_n(y):=\sup_{x\in [-n,n]}  \left(h_0(x)+ \mathcal \S(x,y)\right)\,,\]
and let $\gamma_n$ denote the law of $\mathscr H_n(y)-\mathscr H_n(y_1)$ restricted to $[y_1,y_2]$. Let $\mu$ be the law of a Brownian motion starting from $(y_1,0)$ with diffusion parameter $2$ on $[y_1,y_2]$. Then by Proposition \ref{t:main}, $\gamma_n\ll\mu$ for all $n\in \N$. Now for any measurable set $A\in \C_0[y_1,y_2]$ with $\mu(A)=0$, we have $\gamma_n(A)=0$ since $\gamma_n\ll\mu$, for all $n\in \N$. Hence
\begin{eqnarray*}
\P(h(y)-h(y_1)\in A)&\leq& \sum_{n=1}^\infty\P\left(\{h(y)-h(y_1)\in A\} \cap \{[L,R]\subseteq [-n,n]\}\right)\\
&=&\sum_{n=1}^\infty\P\left(\{\mathscr H_{n}(y)-\mathscr H_{n}(y_1)\in A\} \cap \{[L,R]\subseteq [-n,n]\}\right)\\
&\leq& \sum_{k=1}^\infty \gamma_n(A)=0\,.
\end{eqnarray*}
Hence the law of $h(y)-h(y_1)$ on $[y_1,y_2]$ is absolutely continuous with respect to $\mu$.
\end{proof}

Finally we are ready to prove Theorem\ref{t:gen}, the main result of this paper.

\begin{remark}
Observe that since $\cL(x, 0; y, t)=\S_{t^{1/3}}(x,y)=t^{1/3}\S(t^{-2/3}x,t^{-2/3}y)$, it suffices to prove Theorem \ref{t:gen} for $t=1$. For $t=1$, we have
\begin{equation}\label{e:defh2}
h(y):=h_1(y)= \sup_{x\in \R}  \left(h_0(x)+ \mathcal \S(x,y)\right)\,.
\end{equation}
\end{remark}

\begin{proof}[Proof of Theorem \ref{t:gen}] Without loss of generality, assume $t=1$. Let $y_1<y_2$ and we consider $h(y)$ for $y\in [y_1,y_2]$. First we assume that $h_0$ is defined on $[x_1,x_2]$ for some $x_1<x_2$  (that is $h_0(x)=-\infty$ for all $x\notin [x_1,x_2]$). Since $h_0$ is $1$-finitary and defined on $[x_1,x_2]$, hence $h_0$ is bounded above. Let $h_0(x)\leq B$ for all $x\in [x_1,x_2]$.

Now by the metric composition law of the directed landscape, we have
\[\cL(x,0;y,1)=\sup_{z\in \R}(\cL(x,0;z,1/2)+\cL(z,1/2;y,1))\,.\]
Thus from \eqref{e:defh2},
\begin{eqnarray}\label{e:split}
h(y)&=& \sup_{x\in [x_1,x_2]}  \left(h_0(x)+ \cL(x,0;y,1)\right)\\
&=&\sup_{x\in [x_1,x_2],z\in \R}  \left(h_0(x)+ \cL(x,0;z,1/2)+\cL(z,1/2;y,1)\right)\nonumber\\
&=&\sup_{z\in \R}\left(\mathcal U(z)+\cL(z,1/2;y,1)\right)\,,
\end{eqnarray}
where
\[\mathcal U(z):=\sup_{x\in [x_1,x_2]}(h_0(x)+ \cL(x,0;z,1/2))\,,\]
for all $z\in \R$. Because of independent increment property of directed landscape, we have $\mathcal U(\cdot)$ and $\cL(\cdot,1/2;\cdot,1)$ are independent.
Since
\[|\mathcal U(z_1)-\mathcal U(z_1)|\leq \sup_{x\in [x_1,x_2]}|\cL(x,0;z_1,1/2)-\cL(x,0;z_2,1/2)|\,,\]
and $\cL(\cdot,0;\cdot,1/2)$ is continuous, hence $\mathcal U(\cdot)$ is continuous. Moreover, we claim that $\mathcal U$ is $1/2$-finitary almost surely. Indeed, for all $z\in \R$,
\begin{equation*}
\mathcal U(z)\leq c+\sup_{x\in [x_1,x_2]}\cL(x,0;z,1/2)\leq C+C|z|^{1/3}\,,
\end{equation*}
for some constant $c$ and some random constant $C$ by \citeDOV, Corollary $10.7$.

Thus, conditioning on $\cL(\cdot,0;\cdot,1/2)$ and applying Proposition \ref{t:cont}, from \eqref{e:split} (using the scaling property of the directed landscape that $\cL(z,1/2;y,1)\overset{d}{=}2^{-1/3}\S(2^{2/3}x,2^{2/3}y)$), we have the law of $h(y)-h(y_1)$ restricted to $[y_1,y_2]$ is absolutely continuous with respect to $\mu$, the law of a Brownian motion starting from $(y_1,0)$ with diffusion parameter $2$ on $[y_1,y_2]$.

Now we prove Theorem \ref{t:gen} for any $1$-finitary initial condition $h_0$. By Lemma $6.1$, we get two random variables $L<R$ such that
\[h(y):= \sup_{x\in \R}  \left(h_0(x)+ \mathcal \S(x,y)\right)=\sup_{x\in [L,R]}  \left(h_0(x)+ \mathcal \S(x,y)\right)\,,\]
for all $y\in [y_1,y_2]$. Now the rest of the proof follows exactly as in that of Proposition \ref{t:cont}, by using the corresponding result of absolute continuity for compactly defined $h_0$ proved above.
\end{proof}

\section{Finitary initial conditions}\label{s:last}

We check that a $t_0$-finitary initial condition is necessary and sufficient for a.s.\ finiteness of $h_t(y)$ for all $y\in \mathbb R, t\in (0,t_0]$. Morover, for finitary initial conditions, the optimization happens in a compact interval.

\begin{proposition}\label{l:conditionh0}
For any $t_0$-finitary initial condition $h_0$, a.s.\ $h_t(y)$ is finite for all $y\in \R, t\in(0,t_0]$. Moreover given a compact $K\subset \mathbb R \times (0,t_0]$ there exist random variables $L,R$ so that
$$h_t(y) =\sup_{x\in[L,R]} (h_0(x) + \mathcal L(x,0;y,t))\,.
$$
for all $(y,t)\in K$.

Conversely, for any $h_0$ that is not $t_0$-finitary and $h_0\not\equiv -\infty$, there exists an infinite interval $A$ so that a.s. $h_{t_0}(y)=\infty$ for all $y\in A$.
\end{proposition}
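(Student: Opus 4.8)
The plan is to read off both implications from the macroscopic parabolic behaviour of the directed landscape, using only that $\cL$ is a.s.\ finite and jointly continuous on $\R^4_\uparrow$ together with the quantitative parabolic bounds of \citeDOV, Section~10 (e.g.\ Corollary~10.7): for every compact $I\subset(0,\infty)$ there is an a.s.\ finite random constant $C$ and a deterministic function $\phi$ with $\phi(x,y)=o(|x|+|y|)$ such that $|\cL(x,0;y,t)+(x-y)^2/t|\le C\phi(x,y)$ for all $x,y\in\R$ and $t\in I$ (the error in \citeDOV\ is of order $(|x|+|y|)^{1/2}$ up to logarithms; any sublinear bound suffices). The role of the $t_0$-finitary hypothesis is exactly that $t\le t_0$ forces $-(x-y)^2/t\le -x^2/t_0+c(y)|x|$ with $c$ locally bounded, so the parabola dominates $h_0$ on compact $(y,t)$-sets, whereas failure of the hypothesis is precisely what lets $h_0$ overpower the parabola along a sequence $x_n\to\pm\infty$.

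For the forward direction, fix a compact $K\subset\R\times(0,t_0]$ and a point $x_0$ with $h_0(x_0)\in\R$ (possible since $h_0\not\equiv-\infty$). Then $h_t(y)\ge h_0(x_0)+\cL(x_0,0;y,t)$, so $m_0:=\inf_{(y,t)\in K}(h_0(x_0)+\cL(x_0,0;y,t))$ is an a.s.\ finite random lower bound for $h_t(y)$ on $K$. For the upper side, the parabolic bound and $t\le t_0$ give, for $(y,t)\in K$,
\[
h_0(x)+\cL(x,0;y,t)\ \le\ \big(h_0(x)-x^2/t_0\big)+c_K|x|+C\phi(x,y),
\]
with $c_K$ depending only on $K$; since $h_0$ is $t_0$-finitary, $(h_0(x)-x^2/t_0)/|x|\to-\infty$, and since $\phi(x,y)=o(|x|)$ locally uniformly in $y$ the right-hand side tends to $-\infty$ as $|x|\to\infty$, uniformly over $(y,t)\in K$. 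Hence $\sup_{|x|>m,\,(y,t)\in K}(h_0(x)+\cL(x,0;y,t))\to-\infty$ as $m\to\infty$, so there is an a.s.\ finite random $m\ge|x_0|$ for which this supremum is $<m_0\le h_t(y)$ throughout $K$; therefore $h_t(y)=\sup_{x\in[-m,m]}(h_0(x)+\cL(x,0;y,t))$, which is finite because $h_0$ is bounded above on $[-m,m]$ and $\cL$ is continuous, so $L=-m$, $R=m$ works. Exhausting $\R\times(0,t_0]$ by countably many such compacts gives a.s.\ finiteness for all $y\in\R$, $t\in(0,t_0]$ at once.

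For the converse, failure of the $t_0$-finitary property (with $h_0\not\equiv-\infty$) leaves two cases. If $h_0$ is unbounded above on some compact $[a,b]$, pick $x_n\in[a,b]$ with $h_0(x_n)\to\infty$ and, by compactness, a subsequence with $x_n\to x_\ast$; then $\cL(x_n,0;y,t_0)\to\cL(x_\ast,0;y,t_0)\in\R$ locally uniformly in $y$, so $h_{t_0}(y)\ge h_0(x_n)+\cL(x_n,0;y,t_0)\to\infty$ for every $y$, and $A=\R$ works. Otherwise $h_0$ is bounded above on compacts but $(h_0(x)-x^2/t_0)/|x|\not\to-\infty$, so there is a constant $C_0$ and, after passing to a subsequence, $x_n\to+\infty$ (or $x_n\to-\infty$) with $h_0(x_n)\ge x_n^2/t_0-C_0|x_n|$. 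Taking $x_n\to+\infty$, the lower parabolic bound gives, for $y$ in a fixed compact,
\[
h_{t_0}(y)\ \ge\ h_0(x_n)+\cL(x_n,0;y,t_0)\ \ge\ \frac{2x_ny-y^2}{t_0}-C_0x_n-C\phi(x_n,y),
\]
and the right-hand side equals $x_n(2y/t_0-C_0)+O(1)+o(x_n)$, which tends to $+\infty$ whenever $2y/t_0-C_0>0$; using local uniformity in $y$ of the \citeDOV\ bound we conclude a.s.\ $h_{t_0}(y)=\infty$ for all $y$ in the infinite interval $A=(C_0t_0/2,\infty)$ (respectively $A=(-\infty,-C_0t_0/2)$ when $x_n\to-\infty$).

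The main obstacle is not conceptual but is a matter of having the right quantitative input: the forward direction needs an upper bound on $\cL(x,0;y,t)$ valid simultaneously for all $x\in\R$ and all $(y,t)$ in a compact set with error $o(|x|)$ — so that it is beaten by the quadratic gain from $t\le t_0$ together with the linear loss forced by the $t_0$-finitary condition — and mere pointwise continuity of $\cL$ does not suffice; this is exactly what Section~10 of \citeDOV\ supplies. The only mildly delicate point in the converse is the last case, where one needs the matching lower bound along the sequence $x_n$; this again follows from the \citeDOV\ tail estimates (by Borel--Cantelli the landscape fluctuation at $x_n$ is a.s.\ $o(|x_n|)$).
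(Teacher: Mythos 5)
Your proposal is correct and follows essentially the same route as the paper: both rely on Corollary~10.7 of \citeDOV\ to bound $\mathcal L(x,0;y,t)+(x-y)^2/t$ sublinearly in $|x|$ uniformly over compact $(y,t)$-sets, deduce uniform decay of $h_0(x)+\mathcal L(x,0;y,t)$ as $|x|\to\infty$ (giving $[L,R]$ and finiteness), and handle the converse by the same two cases (unboundedness of $h_0$ on a compact interval, or a sequence $x_n\to\pm\infty$ along which the quadratic gain is not beaten). The only cosmetic difference is your invocation of Borel--Cantelli at the very end, which is unnecessary since the \citeDOV\ bound already holds simultaneously in $x$ with a single random constant.
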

\begin{proof}


By Corollary 10.7 of \citeDOV\  with a random constant $C$ the directed landscape satisfies
$$
|\mathcal L(x,0;y,t) + (x-y)^2/t| \le C(1+|x|^{1/5}+|y|^{1/5}+|t|^{1/5})(t^{1/3}\vee \log_+^{4/3} (1/t))\,.
$$
Recall that $h_t(y) =\sup_x (h_0(x) + \mathcal L(x,0;y,t))$.
 With  some random continuous functions $C$ of $(y,t)\in\mathbb R\times(0,t_0]$ we have
\begin{eqnarray}\label{e:lastbound}
-C+\sup_x \left(h_0(x)-\frac{x^2}{t} +\frac{2xy}t - C|x|^{1/5}\right) & \le& h_t(y)\notag\\
& \le& C+\sup_x \left(h_0(x)-\frac{x^2}{t} +\frac{2xy}{t} + C|x|^{1/5}\right).
\end{eqnarray}

The upper bound implies that when $h_0$ is a $t$-finitary initial condition, then $h_t(y)<\infty$. Indeed, for such $h_0$, the upper bound converges to $-\infty$ as $|x|\to \infty$ uniformly over $(y,t)$ in fixed compact subsets of $\mathbb R\times(0,t_0]$.

Now we assume that $h_0$ is not $t_0$-finitary. If $h_0$ is not bounded above in some compact interval, then by the lower bound in \eqref{e:lastbound}, $h_{t_0}\equiv \infty$. On the other hand, when $h_0(x)-x^2/t_0 > a|x|$ along some sequence $x_n \to \pm \infty$, the expression in the supremum in the lower bound in \eqref{e:lastbound} converges to $\infty$ along this sequence whenever $\mp2y/t_0<a$.
\end{proof}

\bigskip

\noindent {\bf Acknowledgments.}  B.V. was supported by the Canada Research Chair program, the NSERC Discovery Accelerator grant and the MTA Momentum Random Spectra research group. The authors thank Duncan Dauvergne for fruitful discussions. \sourav{The authors thank the anonymous referee whose careful reading and detailed comments helped improve the paper.}

\bibliographystyle{amsplain}

\end{document}